\theoremstyle{thmstyleone}%
\newtheorem{theorem}{Theorem}
\newtheorem{lemma}[theorem]{Lemma}
\newtheorem{corollary}[theorem]{Corollary}
\theoremstyle{thmstyletwo}%
\theoremstyle{thmstylethree}%
\numberwithin{equation}{section}
\begin{document}

\title[Article Title]{Bilateral Bailey pairs and Rogers--Ramanujan type identities}


\author[1]{\sur{Xiangxin Liu}}\email{liuxx@mail.nankai.edu.cn}

\author*[2]{\sur{Lisa Hui Sun}}\email{sunhui@nankai.edu.cn}

\affil*[1,2]{Center for Combinatorics, LPMC, Nankai University, Tianjin 300071, P.R. China}


\abstract{Rogers--Ramanujan type identities occur in various branches of mathematics and physics. As a classic and powerful tool to deal with Rogers--Ramanujan type identities, the theory of Bailey's lemma has been extensively studied and generalized. In this paper, we found a bilateral Bailey pair that naturally arises from the $q$-binomial theorem. By applying the bilateral versions of Bailey lemmas, Bailey chains and Bailey lattices, we derive a number of Rogers--Ramanujan type identities, which unify many known identities as special cases. Further combined with the bilateral Bailey chains due to Berkovich, McCoy and Schilling and the bilateral Bailey lattices due to Jouhet et al., we also obtain identities on Appell--Lerch series and identities of Andrews--Gordon type. Moreover, by applying Andrews and Warnaar's bilateral Bailey lemmas, we derive identities on   Hecke--type series.}

\keywords{Bilateral Bailey pair,
Rogers--Ramanujan type identities,  Appell--Lerch series, Andrews--Gordon identity, Hecke--type series}


\pacs[MSC Classification]{33D15, 11P84}

\maketitle

\section{Introduction}

The famous Rogers--Ramanujan identities
\[
\sum_{n=0}^{\infty}\frac{q^{n^2}}{(q; q)_n}=\frac{1}{(q, q^4; q^5)_\infty},\quad
\sum_{n=0}^{\infty}\frac{q^{n^2+n}}{(q; q)_n}=\frac{1}{(q^2, q^3; q^5)_\infty},
\]
are due to Rogers \cite{Rog02} in 1894 and were rediscovered independently by Ramanujan \cite{RR}. Rogers and many other researchers discovered many series--product identities similar in the form of the Rogers--Ramanujan identities, and thereby are called ``Rogers--Ramanujan type identities".  In 1949, inspired by Rogers' proof of the Rogers--Ramanujan identities \cite{Rog01}, Bailey published a remarkable article \cite{Bai49} and obtained a number of Rogers--Ramanujan type identities by using the mechanism what is called the Bailey transform. The key of the Bailey transform is the construction of the Bailey pair, which is given as a pair of sequences of rational functions $(\alpha_n(a,q), \beta_n(a,q))_{n\geq 0}$  with respect to $a$ such that
\begin{align}\label{Baileyp}
\beta_n(a,q)=\sum_{j=0}^n \frac{\alpha_j(a,q)}{(q)_{n-j}(aq)_{n+j}}.
\end{align}
For a Bailey pair in the form of  \eqref{Baileyp}, Bailey \cite{Bai49} provided  the fundamental result named Bailey's lemma, which is formalized in the following lemma due to Andrews \cite{And842},  who laid the foundations of the Bailey pair and Bailey chain theory for discovering and proving $q$-identities.

\begin{lemma}[Bailey's lemma] \label{baileylem} If $\alpha_n(a,q), \beta_n(a,q)$ form a Bailey pair, then
\begin{align*}
&\frac{1}{(aq/\rho_1,aq/\rho_2)_n}\sum_{j=0}^n \frac{(\rho_1,\rho_2)_j(aq/\rho_1\rho_2)_{n-j}}{(q)_{n-j}} \bigg(\frac{aq}{\rho_1\rho_2}\bigg)^j \beta_j(a,q)\\
&\qquad =\sum_{j=0}^n \frac{(\rho_1,\rho_2)_j}{(q)_{n-j}(aq)_{n+j}(aq/\rho_1,aq/\rho_2)_j} \bigg(\frac{aq}{\rho_1\rho_2}\bigg)^j \alpha_j(a,q).
\end{align*}
\end{lemma}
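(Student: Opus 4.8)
The plan is to feed the defining relation \eqref{Baileyp} of the Bailey pair into the left-hand side, interchange the order of the resulting double sum, and evaluate the inner sum in closed form by the classical $q$-Pfaff--Saalschütz summation. Since the right-hand side is precisely $\sum_{j=0}^n \alpha_j'/((q)_{n-j}(aq)_{n+j})$ for the ``shifted'' numerator sequence $\alpha_j'=(\rho_1,\rho_2)_j(aq/\rho_1\rho_2)^j\,\alpha_j(a,q)/(aq/\rho_1,aq/\rho_2)_j$, proving the lemma amounts to showing that the left-hand side collapses, after summing out one index, to exactly this expression.

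First I would replace $\beta_j(a,q)$ on the left by $\sum_{k=0}^j \alpha_k(a,q)/\big((q)_{j-k}(aq)_{j+k}\big)$, producing a double sum over the triangle $0\le k\le j\le n$. Interchanging the order of summation so that $k$ runs from $0$ to $n$ and $j$ from $k$ to $n$, I can pull $\alpha_k(a,q)$ out and isolate the inner sum
\[
S_{k,n}=\sum_{j=k}^n \frac{(\rho_1,\rho_2)_j\,(aq/\rho_1\rho_2)_{n-j}}{(q)_{n-j}\,(q)_{j-k}\,(aq)_{j+k}}\bigg(\frac{aq}{\rho_1\rho_2}\bigg)^{\!j},
\]
which depends only on $k$ and $n$. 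The target is to show $S_{k,n}=(aq/\rho_1,aq/\rho_2)_n\,(\rho_1,\rho_2)_k(aq/\rho_1\rho_2)^k\big/\big((q)_{n-k}(aq)_{n+k}(aq/\rho_1,aq/\rho_2)_k\big)$, since dividing by $(aq/\rho_1,aq/\rho_2)_n$ and resumming against $\alpha_k(a,q)$ then reproduces the right-hand side verbatim.

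To evaluate $S_{k,n}$ I would perform the index shift $j\mapsto k+i$ to normalise the sum to $i=0,\dots,n-k$, and rewrite each $q$-shifted factorial by the splitting rule $(x)_{m+r}=(x)_m\,(xq^m)_r$ together with the reflection identity $(q)_{n-k-i}^{-1}\propto (q^{k-n})_i$ that converts the upper terminal into a numerator factor $q^{-(n-k)}$. This recasts $S_{k,n}$, up to a prefactor depending only on $k$ and $n$, as a terminating, Saalschützian (balanced) ${}_3\phi_2$ in $i$, to which the $q$-Pfaff--Saalschütz theorem applies and yields a quotient of Pochhammer symbols. I expect the main obstacle to be exactly this bookkeeping step: matching the four shifted factorials $(\rho_1,\rho_2)_{k+i}$, $(aq/\rho_1\rho_2)_{n-k-i}$, $(q)_{n-k-i}$ and $(aq)_{2k+i}$ to the precise numerator and denominator parameters of the balanced ${}_3\phi_2$, and verifying that the balancing condition holds so that $q$-Saalschütz is legitimately invoked. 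Once the closed form for $S_{k,n}$ is in hand, the remaining cancellation of prefactors against $(aq/\rho_1,aq/\rho_2)_n^{-1}$ is routine and completes the proof.
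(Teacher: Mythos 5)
The paper states Bailey's lemma without proof, citing Bailey and Andrews, and your proposal reconstructs exactly the classical argument from those references: insert the defining relation \eqref{Baileyp} into the left-hand side, interchange the double sum, and evaluate the inner sum $S_{k,n}$ by the $q$-Pfaff--Saalschütz summation. Your outline is sound in every detail --- after the shift $j\mapsto k+i$ the inner sum becomes a terminating balanced ${}_3\phi_2$ with numerator parameters $q^{-(n-k)},\,\rho_1q^k,\,\rho_2q^k$, denominator parameters $aq^{2k+1},\,\rho_1\rho_2q^{k-n}/a$ and argument $q$ (the balancing condition holds), and $q$-Saalschütz returns precisely your claimed closed form for $S_{k,n}$, from which the stated identity follows.
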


One of the most famous result in the theory of Bailey's lemma is Slater's list, which is constructed by Slater \cite{Sla51,Sla52} based on the theory of basic hypergeometric series and leads to more than 130  Rogers--Ramanujan type identities. Then Laughlin,  Sills, Zimmer, Lovejoy, et. al.  \cite{And03,And04,Sil02, Sil03, Lov041,Lov13, Sun01,Yao01} further  collated and derived more Rogers--Ramanujan type identities.

MacMahon\cite{MacMahon16}  and Schur \cite{SchurRR} also
gave the combinatorial versions of the two Rogers--Ramanujan identities. In 1961, Gordon \cite{Go} proved  the stronger combinational results for Rogers--Ramanujan  identities. Then in 1974, Andrews \cite{A74} expressed Gordon's result in terms of the following $q$-identity
\begin{align}\label{eq:AGri}
\sum_{n_1\geq\dots\geq n_{k-1}\geq0}\frac{q^{n_1^2+\cdots+n_{k-1}^2+n_{i}+\cdots+n_{k-1}}}
{(q)_{n_1-n_2}\cdots(q)_{n_{k-2}-n_{k-1}}(q)_{n_{k-1}}}
=\frac{(q^{i},q^{2k-i+1},q^{2k+1};q^{2k+1})_\infty}{(q)_\infty},
\end{align}
with $k \geq 2$ and $1 \leq i \leq k$ being two integers, which is  the well-known Andrews--Gordon identity. To prove this identity, a more general tool, the Bailey lattice, was developed by Agarwal and Andrews in \cite{AA}, which can iteratively generate new Bailey pairs with respect to $a/q$ from a known one with respected to $a$, the related  results  can be seen in \cite{AA,BIS00,Lov041}. Then in 2010, the Andrews--Gordon identity was further extended by Andrews \cite{And10} to the following even modulo form,
\begin{align}\label{eq:AGev}
&\sum_{n_1\geq\cdots\geq n_{k-1}\geq0}\frac{q^{n_1^2+\cdots+n_{k-1}^2+2(n_{i}+n_{i+2}+\cdots+n_{k-2})}}
{(q^2;q^2)_{n_1-n_2}\cdots(q^2;q^2)_{n_{k-2}-n_{k-1}}(q^2;q^2)_{n_{k-1}}}\nonumber\\
&\qquad\qquad\qquad\qquad\qquad\qquad\qquad=\frac{(-q;q^2)_\infty
(q^{i},q^{2k-i+2},q^{2(k+1)};q^{2(k+1)})_\infty}{(q^2;q^2)_\infty},
\end{align}
where  $1\leq i \leq k$ and $k\equiv i \pmod{2}$ are two integers.

Analogous to the definition of Bailey pair, in 1987, Paule \cite{Pau87} had studied the bilateral versions of Bailey pairs. In order to study the character polynomials of the module $M(p, p+1)$, Berkovich, McCoy and Schilling \cite{BMS96} introduced the definition of the full bilateral version of Bailey pairs  in 1996, that is, for any $n\in\mathbb{Z}$, a bilateral Bailey pair $(\alpha_n(a,q),\,\beta_n(a,q))$ relative to $a$ is defined to be a pair of sequences of rational functions such that
\begin{align}\label{bbp}
\beta_n(a,q)=\sum_{j=-\infty}^{n}\frac{\alpha_j(a,q)}{(q)_{n-j}(aq)_{n+j}}.
\end{align}
By taking $a=1$, we can see that \eqref{bbp} can be written as
\begin{align}\label{basicbbf}
\beta_n(1,q)=\begin{cases} \sum\limits_{j=-n}^{n}\frac{\alpha_j(1,q)}{(q)_{n-j}(q)_{n+j}}, & \hbox{if $n\geq 0$;} \\[2mm]
0, & \hbox{otherwise}.
\end{cases}
\end{align}

In 2007, Andrews and Warnaar \cite{AW02} gave a bilateral version of the Bailey transform. Then in 2009, Guo, Jouhet and Zeng \cite{GZJ01} studied  the finite forms of the Rogers--Ramanujan identities by applying the bilateral Bailey lemma. Encouraged by these works, Chu and Zhang \cite{Chu01} further constructed a bilateral Bailey lemma related to $a=1$.

In this paper, we found the following bilateral Bailey pair naturally from the $q$-binomial theorem
\begin{align}
(\alpha_n(1,q),\ \beta_n(1,q))=\Big((-1)^nz^nq^{\binom{n}{2}}, \ \frac{(z, q/z)_n}{(q)_{2n}} \Big).\label{mainbbp}
\end{align}
By fitting the bilateral Bailey pair into the bilateral Bailey lemmas, we are led to a number of Rogers--Ramanujan type identities, such as
\begin{align}\label{RRmod21}
&\sum_{n=0}^{\infty}\frac{(q, q^6; q^7)_n}{(q^7; q^7)_{2n}}q^{7n^2}=\frac{(q^8, q^{13}, q^{21}; q^{21})_\infty}{(q^7; q^7)_{\infty}},\\
&\sum_{n=0}^{\infty}\frac{(-q^3, -q^5; q^8)_n}{(q^8; q^8)_{2n}}q^{8n^2}=\frac{(-q^{11}, -q^{13}, q^{24}; q^{24})_\infty}{(q^8; q^8)_{\infty}}. \label{RRmod24}
\end{align}
Moreover, the key bilateral Bailey pair \eqref{mainbbp} also leads to  identities on Appell--Lerch series and identities of  Andrews--Gordon type, which will be shown in details in Section 3 and Section 4.   In Section 5, by applying  Andrews and Warnaar's bilateral Bailey lemmas, we obtain identities on Hecke--type series.

\section{Preliminaries}
Throughout this paper, we adopt  standard notations and terminologies
for $q$-series \cite{GR04}. We assume that $|q|<1$, then the $q$-shifted factorial is given by
\[
(a)_n=(a;q)_n=\begin{cases}
1, & \text{\it if $n=0$}; \\[2mm]
\prod\limits_{j=1}^{n}(1-aq^{j-1}), & \text{\it if $n\geq 1$},
\end{cases}
\]
and
\[
(a)_\infty=(a;q)_\infty=\prod_{n=0}^\infty (1-aq^n).
\]
There are more compact notations for the multiple $q$-shifted factorials:
\begin{align*}
&(a_1,a_2,\cdots,a_m)_n=(a_1,a_2,\cdots,a_m;q)_n=(a_1;q)_n(a_2;q)_n \cdots(a_m;q)_n,\\
&(a_1,a_2,\cdots,a_m)_{\infty}=(a_1,a_2,\cdots,a_m;q)_{\infty}=(a_1;q)_{\infty}
(a_2;q)_{\infty}\cdots(a_m;q)_{\infty}.
\end{align*}
The $q$-binomial coefficients, or Gaussian polynomials are given by
${n \brack k}=\frac{(q)_n}{(q)_k(q)_{n-k}}$. 
The $q$-binomial theorem \cite[(3.3.6)]{And00}, also called Cauchy's binomial theorem, is known as
\begin{align}
(z)_n=\sum^{n}_{j=0}{n\brack j}(-1)^j q^{\binom{j}{2}} z^j .\label{q-binomial-thm}
\end{align}
The well-known Jacobi's triple product identity \cite[(II.28)]{GR04} is
\begin{align}
(z, q/z, q; q)_\infty=\sum_{n=-\infty}^{\infty}(-1)^nq^{\binom{n}{2}}z^n. \label{JTP}
\end{align}

In \cite{BMS96},  Berkovich, McCoy and Schilling gave a bilateral Bailey chain as follows.

\begin{lemma}\label{thm:bilatbaileylemma}
If $(\alpha_n,\,\beta_n)$ is a bilateral Bailey pair related to $a$, then
so is $(\alpha'_n,\,\beta'_n)$, where
\begin{align*}
&\alpha'_n(a,q)=\frac{(x,y)_n}{(aq/x,aq/y)_n}\left(\frac{aq}{xy}\right)^n\alpha_n(a,q),\\
&\beta'_n(a,q)=\frac{1}{(aq/x,aq/y)_n}\sum_{j=-\infty}^n\frac{(x,y)_j(aq/xy)_{n-j}}{(q)_{n-j}}\left(\frac{aq}{xy}\right)^j\beta_j(a,q),
\end{align*}
subject to convergence conditions on the sequences $\alpha_n$ and $\beta_n$, which make the relevant infinite series absolutely convergent.
\end{lemma}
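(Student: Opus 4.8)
The plan is to verify directly that the pair $(\alpha'_n,\beta'_n)$ satisfies the defining relation \eqref{bbp} of a bilateral Bailey pair, namely that $\beta'_n(a,q)=\sum_{r=-\infty}^{n}\alpha'_r(a,q)/[(q)_{n-r}(aq)_{n+r}]$. First I would take the given expression for $\beta'_n$ and substitute the hypothesis that $(\alpha_n,\beta_n)$ is itself a bilateral Bailey pair, replacing each $\beta_j$ by $\sum_{r=-\infty}^{j}\alpha_r/[(q)_{j-r}(aq)_{j+r}]$. This yields a double sum over the indices $j$ and $r$. I would then interchange the order of summation so that $r$ is the outer index and $j$ the inner one. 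Because of the standard convention $1/(q)_m=0$ for $m<0$, the two factors $(q)_{n-j}$ and $(q)_{j-r}$ in the denominator confine the inner index to the range $r\le j\le n$, so that after the swap the $j$-sum is, for each fixed $r$, a genuinely terminating sum.

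The heart of the argument is the closed-form evaluation of this inner sum. Setting $j=r+k$ with $0\le k\le n-r$ and expanding every $q$-shifted factorial whose index involves $k$ through the elementary relations $(c)_{m+k}=(c)_m(cq^m)_k$ and $(c)_{m-k}=(c)_m(-q/c)^k q^{\binom{k}{2}-mk}/(q^{1-m}/c)_k$, I would recast the inner sum as a terminating, balanced ${}_3\phi_2$ series of Saalschützian type. Applying the $q$-Pfaff--Saalschütz summation (the standard terminating balanced ${}_3\phi_2$ evaluation, see \cite{GR04}) then collapses it to the single product $(aq/x,aq/y)_n(x,y)_r(aq/xy)^r/[(aq/x,aq/y)_r(q)_{n-r}(aq)_{n+r}]$. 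The factor $(aq/x,aq/y)_n$ cancels against the prefactor $1/(aq/x,aq/y)_n$ in the definition of $\beta'_n$, and what remains, once the coefficient $\alpha_r$ is restored, is exactly $(x,y)_r(aq/xy)^r\alpha_r/[(aq/x,aq/y)_r(q)_{n-r}(aq)_{n+r}]=\alpha'_r/[(q)_{n-r}(aq)_{n+r}]$ by the definition of $\alpha'_r$. Summing over $r\le n$ then reproduces the required relation.

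The main obstacle is not this inner computation, which is word-for-word the same as the one underlying the classical unilateral Bailey lemma (Lemma \ref{baileylem}), but rather the legitimacy of interchanging the two infinite summations. In the unilateral case every sum is finite and the swap is automatic; here both the defining sum for $\beta'_n$ and the substituted sum for each $\beta_j$ run down to $-\infty$, so one must first know that the resulting double series converges absolutely before any Fubini-type rearrangement is permitted. This is precisely what the convergence hypotheses attached to the statement are for: for the specific sequences $\alpha_n,\beta_n$ under consideration one has to check that the combined effect of the geometric factors $(aq/xy)^{\,j}$, the quadratic $q$-powers arising from the Pochhammer expansions, and the growth of the Pochhammer symbols themselves forces absolute convergence of both the double series and the final $r$-sum. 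Granting this, the finite-sum identity established via $q$-Pfaff--Saalschütz transfers verbatim to the bilateral series, which completes the proof.
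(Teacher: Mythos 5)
Your proof is correct, and it is essentially the canonical argument: the paper itself gives no proof of Lemma \ref{thm:bilatbaileylemma}, quoting it from Berkovich, McCoy and Schilling \cite{BMS96}, where it is established exactly as you do --- substitute the defining relation \eqref{bbp} into $\beta'_n$, interchange the two sums so the inner $j$-sum terminates over $r\le j\le n$, and evaluate that inner sum by the terminating $q$-Pfaff--Saalsch\"utz summation \cite{GR04} to recover $\alpha'_r/[(q)_{n-r}(aq)_{n+r}]$. Your observation that the only genuinely new issue beyond the unilateral Bailey chain (Lemma \ref{baileylem}) is the Fubini-type interchange, which is precisely what the absolute-convergence hypothesis in the statement is there to license, is exactly the right point.
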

By inserting the bilateral Bailey pair $(\alpha'_n(a,q),\,\beta'_n(a,q))$ as given in Lemma \ref{thm:bilatbaileylemma} into the definition of bilateral Bailey pairs \eqref{bbp} and letting $n\rightarrow\infty$, Berkovich, McCoy and Schilling \cite{BMS96} also gave the following bilateral Bailey lemma.

\begin{lemma}[BMS' bilateral Bailey lemma] \label{bbpinfinite}
If $(\alpha_n,\,\beta_n)$ is a bilateral Bailey pair related to $a$, then
\begin{align*}
&\sum_{n=-\infty}^{\infty} (x, y)_{n}\left(\frac{aq}{xy}\right)^{n}\beta_{n}(a,q)=\frac{(aq/x,aq/y)_{\infty}}{(aq,aq/xy)_{\infty}}\sum_{n=-\infty}^{\infty} \frac{(x,y)_n}{(aq/x,aq/y)_{n}} \left(\frac{aq}{x y}\right)^{n}\alpha_{n}(a,q).
\end{align*}
\end{lemma}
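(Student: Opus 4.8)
The plan is to apply the bilateral Bailey chain of Lemma \ref{thm:bilatbaileylemma} to manufacture a new bilateral Bailey pair $(\alpha'_n,\beta'_n)$ out of the given pair $(\alpha_n,\beta_n)$, and then feed this new pair straight back into the defining relation \eqref{bbp} before taking $n\to\infty$. Since Lemma \ref{thm:bilatbaileylemma} guarantees that $(\alpha'_n,\beta'_n)$ is again a bilateral Bailey pair relative to $a$, the definition \eqref{bbp} applied to it reads, after substituting the explicit form of $\beta'_n$ on the left,
\begin{align*}
\frac{1}{(aq/x,aq/y)_n}\sum_{j=-\infty}^n\frac{(x,y)_j(aq/xy)_{n-j}}{(q)_{n-j}}\left(\frac{aq}{xy}\right)^j\beta_j(a,q)
=\sum_{j=-\infty}^n\frac{\alpha'_j(a,q)}{(q)_{n-j}(aq)_{n+j}}.
\end{align*}

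Next I would pass to the limit $n\to\infty$ on both sides term by term. For fixed $j$ the elementary limits $(q)_{n-j}\to(q)_\infty$, $(aq)_{n+j}\to(aq)_\infty$, $(aq/xy)_{n-j}\to(aq/xy)_\infty$ and $(aq/x,aq/y)_n\to(aq/x,aq/y)_\infty$ all hold, while the upper summation index sweeps out to $+\infty$. Substituting $\alpha'_j=\frac{(x,y)_j}{(aq/x,aq/y)_j}(aq/xy)^j\alpha_j$ on the right, the left-hand side tends to
\begin{align*}
\frac{(aq/xy)_\infty}{(q)_\infty(aq/x,aq/y)_\infty}\sum_{j=-\infty}^{\infty}(x,y)_j\left(\frac{aq}{xy}\right)^j\beta_j(a,q),
\end{align*}
and the right-hand side tends to
\begin{align*}
\frac{1}{(q)_\infty(aq)_\infty}\sum_{j=-\infty}^{\infty}\frac{(x,y)_j}{(aq/x,aq/y)_j}\left(\frac{aq}{xy}\right)^j\alpha_j(a,q).
\end{align*}
Cancelling the common factor $(q)_\infty$, moving the surviving $q$-factorials to one side, and using $(aq/xy)_\infty(aq)_\infty=(aq,aq/xy)_\infty$, I would recover exactly the asserted identity.

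The one genuinely delicate point is justifying the termwise passage to the limit, that is, interchanging $\lim_{n\to\infty}$ with the two-sided infinite summations. This is precisely where the convergence hypotheses attached to Lemma \ref{thm:bilatbaileylemma} are needed: they ensure that the relevant bilateral series are absolutely convergent, so that a dominated-convergence argument on the summands legitimizes taking the limit inside the sum. Everything else is routine bookkeeping of $q$-shifted factorials, so I expect this analytic convergence step, rather than any algebraic manipulation, to be the main obstacle.
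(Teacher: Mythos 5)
Your proposal is correct and matches the paper's own proof exactly: the paper likewise obtains Lemma \ref{bbpinfinite} by inserting the pair $(\alpha'_n,\beta'_n)$ from the bilateral Bailey chain of Lemma \ref{thm:bilatbaileylemma} into the defining relation \eqref{bbp} and letting $n\rightarrow\infty$, with convergence delegated to the absolute-convergence hypotheses attached to that chain. Your bookkeeping of the $q$-shifted factorial limits and the final simplification via $(aq/xy)_\infty(aq)_\infty=(aq,aq/xy)_\infty$ is accurate.
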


Furthermore, when $a=1$ and by noting  \eqref{basicbbf}, the above result reduces to the following weak forms.

\begin{lemma}\label{weakbbailey} For a bilateral Bailey pair $(\alpha_n (1,q), \beta_n(1,q))$, we have
\begin{subequations}
\begin{align}
&\sum_{n=0}^{\infty}q^{n^2}\beta_n=\frac{1}{(q)_\infty}\sum_{n=-\infty}^{\infty}q^{n^2}\alpha_n, \label{bbaileyiden1}\\
&\sum_{n=0}^{\infty}q^{\frac{1}{2}n^2}(-q^{\frac{1}{2}})_n\beta_n
=\frac{(-q^{\frac{1}{2}})_\infty}{(q)_\infty}\sum_{n=-\infty}^{\infty}
q^{\frac{1}{2}n^2}\alpha_n,\label{bbaileyiden4}\\
&2\sum_{n=0}^{\infty}(-1)^n(q; q^2)_n\beta_n
=\frac{(q; q^2)_\infty}{(q^2; q^2)_\infty}\sum_{n=-\infty}^{\infty}
(-1)^n\alpha_n,\label{bbaileyiden5}\\
&\sum_{n=0}^{\infty}q^{\binom{n}{2}}(-q)_n\beta_n
=\frac{(-q)_\infty}{(q)_\infty}\sum_{n=-\infty}^{\infty}
(1+q^n)q^{\binom{n}{2}}\alpha_n,\label{bbaileyiden3}\\
&\sum_{n=0}^{\infty}q^{\binom{n+1}{2}}(-1)_n\beta_n
=\frac{2(-q)_\infty}{(q)_\infty}\sum_{n=-\infty}^{\infty}
\frac{q^{\binom{n+1}{2}}}{1+q^n}\alpha_n.\label{bbaileyiden2}
\end{align}
\end{subequations}
\end{lemma}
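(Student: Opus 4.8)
The plan is to obtain all five identities as specializations of BMS' bilateral Bailey lemma (Lemma \ref{bbpinfinite}) at $a=1$. Setting $a=1$ and invoking \eqref{basicbbf}, the factor $\beta_n(1,q)$ vanishes for $n<0$, so the left-hand sum collapses to a sum over $n\geq 0$ while the right-hand side stays bilateral, giving
\begin{align*}
\sum_{n=0}^{\infty}(x,y)_n\Big(\frac{q}{xy}\Big)^n\beta_n
=\frac{(q/x,q/y)_\infty}{(q,q/(xy))_\infty}
\sum_{n=-\infty}^{\infty}\frac{(x,y)_n}{(q/x,q/y)_n}\Big(\frac{q}{xy}\Big)^n\alpha_n.
\end{align*}
Each of \eqref{bbaileyiden1}--\eqref{bbaileyiden2} then follows by choosing the free parameters $(x,y)$ suitably and, where needed, letting one or both tend to infinity.

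The computational engine is the elementary asymptotic $(x)_n\to(-x)^nq^{\binom{n}{2}}$ as $x\to\infty$, read off from the definition of $(x)_n$ by factoring out the top power of $x$, together with $(q/x)_n\to1$ and $(q/x)_\infty\to1$. These let me evaluate the limiting weights on both sides term by term. For \eqref{bbaileyiden1} I send $x,y\to\infty$: the left weight $(x,y)_n(q/(xy))^n$ tends to $q^{n^2}$, the prefactor to $1/(q)_\infty$, and $(q/x,q/y)_n\to1$ turns the right weight into $q^{n^2}$. For \eqref{bbaileyiden4} I set $x=-q^{1/2}$ and let $y\to\infty$, where the surviving exponent is $\binom{n+1}{2}-\tfrac{n}{2}=\tfrac{n^2}{2}$ and the factor $(q/x)_n=(-q^{1/2})_n$ cancels the numerator $(x)_n$ on the right. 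Identity \eqref{bbaileyiden5} is the genuinely finite case: the choice $x=q^{1/2}$, $y=-q^{1/2}$ collapses both $(x,y)_n$ and $(q/x,q/y)_n$ to $(q;q^2)_n$ through $(q^{1/2},-q^{1/2})_n=(q;q^2)_n$, sets $q/(xy)=-1$, and yields the prefactor $(q;q^2)_\infty/(q,-1)_\infty$.

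Finally, \eqref{bbaileyiden3} and \eqref{bbaileyiden2} use $x=-q,\,y\to\infty$ and $x=-1,\,y\to\infty$ respectively. In both the right-hand weight involves the ratio $(-q)_n/(-1)_n=\tfrac{1+q^n}{2}$, which produces the factors $(1+q^n)$ in \eqref{bbaileyiden3} and $1/(1+q^n)$ in \eqref{bbaileyiden2}; the residual powers of $2$ are then reconciled using $(q/x)_\infty=(-1)_\infty=2(-q)_\infty$ in the prefactor. The same mechanism explains the leading $2$ in \eqref{bbaileyiden5}, via $(q,-1)_\infty=2(q)_\infty(-q)_\infty=2(q^2;q^2)_\infty$.

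I expect the one real obstacle to be analytic rather than algebraic, namely justifying that the limits $x,y\to\infty$ may be taken inside the bilateral sum on the right. I would handle this with the convergence hypotheses already flagged in Lemma \ref{bbpinfinite}: the factor $(q/x,q/y)_n^{-1}$ remains bounded while $(x,y)_n(q/(xy))^n$ approaches a weight of Gaussian order $q^{cn^2}$, so the tails are dominated uniformly by a convergent series and the termwise passage to the limit is legitimate. The remaining labor is the bookkeeping of the factors of $2$, which I would record once and for all as the elementary identities $(-1)_n=2(-q)_{n-1}$, $(-1)_\infty=2(-q)_\infty$, and $(q)_\infty(-q)_\infty=(q^2;q^2)_\infty$.
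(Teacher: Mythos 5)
Your proposal is correct and takes essentially the same route as the paper: the paper's proof consists of exactly this specialization of Lemma \ref{bbpinfinite} at $a=1$, using the same five parameter choices $(x,y)\to(\infty,\infty)$, $(-q^{1/2},\infty)$, $(q^{1/2},-q^{1/2})$, $(-q,\infty)$, $(-1,\infty)$ (up to swapping the symmetric roles of $x$ and $y$). Your write-up simply makes explicit the limit computations, the factor-of-$2$ bookkeeping, and the convergence justification that the paper leaves implicit.
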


\begin{proof}
These results can be obtained by taking $a=1$, and then $x\rightarrow\infty, y\rightarrow\infty$; $x\rightarrow\infty, y\mapsto -\sqrt{q}$; $x\mapsto \sqrt{q}, y\mapsto -\sqrt{q}$; $x\rightarrow\infty, y\mapsto -q$; $x\rightarrow\infty, y\mapsto -1$, respectively in Lemma \ref{bbpinfinite}.
\end{proof}

Moreover, for a given  bilateral Bailey pair $(\alpha_{n}(1,q), \beta_{n}(1,q))$, employing the bilateral Bailey chain as given in Lemma \ref{thm:bilatbaileylemma} iteratively, we can obtain a sequence of bilateral Bailey pairs, which lead to  the following identities on multisums.

\begin{lemma}\label{kbblit} If
$(\alpha_{n},\ \beta_{n})$ is a bilateral Bailey pair relative to $a=1$, then
\begin{subequations}
\begin{align}
&\sum_{n_1\geq\cdots\geq n_k\geq 0}\frac{q^{n_1^2+\cdots+n_k^2}}{(q)_{n_1-n_2}\cdots (q)_{n_{k-1}-n_k}}\beta_{n_k}
=\frac{1}{(q)_\infty}\sum_{n=-\infty}^{\infty}q^{kn^2}\alpha_n,\label{multisum1}\\
&\sum_{n_1\geq\cdots\geq n_k\geq 0}\frac{q^{\frac{1}{2}(n_1^2+n_2^2\cdots+n_k^2)}(-q^{\frac{1}{2}})_{n_k}}
{(q)_{n_1-n_2}\cdots (q)_{n_{k-1}-n_{k}}}\beta_{n_k}
=\frac{(-q^{\frac{1}{2}})_\infty}{(q)_\infty}\sum_{n=-\infty}^{\infty}
q^{\frac{k}{2}n^2}\alpha_n.\label{multisum2}
\end{align}
\end{subequations}
\end{lemma}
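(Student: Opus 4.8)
The plan is to obtain both \eqref{multisum1} and \eqref{multisum2} by iterating the bilateral Bailey chain of Lemma \ref{thm:bilatbaileylemma} at $a=1$ exactly $k-1$ times, and then closing off the resulting pair with the appropriate weak form from Lemma \ref{weakbbailey}. The two cases differ only in the specialization of the chain parameters $x,y$, chosen so that the limiting chain step reproduces the summand on the left while the final cap supplies the top level. Throughout, every pair produced by the chain is again relative to $a=1$, so by \eqref{basicbbf} its $\beta$ vanishes for negative index; this is what forces all summation variables in the multisums to be nonnegative.

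First I would record the two limiting chain steps. Letting $x,y\to\infty$ in Lemma \ref{thm:bilatbaileylemma} with $a=1$, the factors $(q/x,q/y)_n$ and $(q/xy)_{n-j}$ tend to $1$ while $(x,y)_j(q/xy)^j\to q^{j^2}$, so the chain acts as
\begin{align*}
(\alpha_n,\beta_n)\ \longmapsto\ \Big(q^{n^2}\alpha_n,\ \sum_{j\le n}\frac{q^{j^2}}{(q)_{n-j}}\beta_j\Big).
\end{align*}
Taking instead $x\to\infty$ and $y\mapsto-\sqrt q$, one has $q/y=-\sqrt q$, hence $(y)_n/(q/y)_n=1$ and $(x)_j(q/xy)^j\to q^{j^2/2}$, while the factor $(y)_j=(-\sqrt q)_j$ survives, so the chain acts as
\begin{align*}
(\alpha_n,\beta_n)\ \longmapsto\ \Big(q^{\frac12 n^2}\alpha_n,\ \frac{1}{(-\sqrt q)_n}\sum_{j\le n}\frac{q^{\frac12 j^2}(-\sqrt q)_j}{(q)_{n-j}}\beta_j\Big).
\end{align*}
Both maps send a bilateral Bailey pair relative to $a=1$ to another such pair, as the $x,y$--limit of Lemma \ref{thm:bilatbaileylemma}.

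Next I would iterate and cap. For \eqref{multisum1}, applying the first step $m$ times and unfolding the nested sums gives, by induction on $m$, the pair $\alpha^{(m)}_n=q^{mn^2}\alpha_n$ together with a clean $m$-fold sum $\beta^{(m)}_{n_1}=\sum_{n_1\ge\cdots\ge n_{m+1}\ge0}q^{\,n_2^2+\cdots+n_{m+1}^2}/\big((q)_{n_1-n_2}\cdots(q)_{n_m-n_{m+1}}\big)\,\beta_{n_{m+1}}$. Setting $m=k-1$ and inserting $(\alpha^{(k-1)},\beta^{(k-1)})$ into \eqref{bbaileyiden1} attaches one more $q^{n_1^2}$ on the left, completing the symmetric multisum, and turns $q^{(k-1)n^2}$ into $q^{kn^2}$ on the right, which is exactly \eqref{multisum1}. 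For \eqref{multisum2} I would iterate the second step: here the prefactor $1/(-\sqrt q)_{n}$ carried by each new $\beta$ cancels against the factor $(-\sqrt q)_{n}$ appearing inside the next summand, so that after $m=k-1$ steps the $(-\sqrt q)_\bullet$ telescope and leave only a single surviving factor $(-\sqrt q)_{n_k}$ on the innermost index, all over the running prefactor $1/(-\sqrt q)_{n_1}$, together with $\alpha^{(k-1)}_n=q^{(k-1)n^2/2}\alpha_n$. Capping with \eqref{bbaileyiden4}, whose external weight is $q^{\frac12 n_1^2}(-\sqrt q)_{n_1}$, the factor $(-\sqrt q)_{n_1}$ exactly cancels the outer prefactor, the exponents assemble into $q^{\frac12(n_1^2+\cdots+n_k^2)}$, and the right side becomes $\frac{(-\sqrt q)_\infty}{(q)_\infty}\sum_n q^{\frac k2 n^2}\alpha_n$, which is \eqref{multisum2}.

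The hard part will be the bookkeeping in the second case: one must verify the $x\to\infty,\ y\mapsto-\sqrt q$ limit of the chain of Lemma \ref{thm:bilatbaileylemma} (in particular the absolute convergence of the bilateral series, so that the limit of a bilateral Bailey pair is again a bilateral Bailey pair), and then check that the $(-\sqrt q)_\bullet$ factors genuinely telescope through every iteration so that only $(-\sqrt q)_{n_k}$ and the outer $1/(-\sqrt q)_{n_1}$ remain uncancelled before the cap. The first identity is comparatively routine, since the $x,y\to\infty$ step introduces no auxiliary factors; there the only care needed is justifying the reindexing of the nested sums (finite at each level because $\beta^{(m)}_n=0$ for $n<0$ by \eqref{basicbbf}) and the reduction to the base case \eqref{bbaileyiden1}, which is precisely the $k=1$ instance.
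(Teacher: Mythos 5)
Your proposal is correct and takes essentially the same route as the paper: iterate the $a=1$ limits of the bilateral Bailey chain of Lemma~\ref{thm:bilatbaileylemma} (with $x,y\to\infty$ for \eqref{multisum1}, and $x\to\infty$, $y\mapsto -q^{\frac12}$ for \eqref{multisum2}) a total of $k-1$ times, then cap with the weak forms \eqref{bbaileyiden1} and \eqref{bbaileyiden4}, respectively. The only difference is one of completeness: the paper works out the first case and dismisses the second with ``similarly,'' whereas you make explicit the $y\mapsto -q^{\frac12}$ chain step and the telescoping of the $(-q^{\frac12})_{\bullet}$ factors, which is precisely the bookkeeping the paper leaves unstated.
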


\begin{proof} When $a=1$, $x\rightarrow\infty, y\rightarrow\infty$ and by noting \eqref{basicbbf}, the bilateral Bailey chain as given in Lemma \ref{thm:bilatbaileylemma} can be represented as
\begin{align*}
&\alpha^{(1)}_n=q^{n^2} \alpha_n,\quad \
\beta^{(1)}_n=\sum_{j=0}^{n} \frac{q^{{j}^2}}{(q)_{n-j}}\beta_{j}.
\end{align*}
Iteratively the above relation $k-1$ times, we obtain the following bilateral Bailey pair
\begin{align*}
&\alpha^{(k-1)}_n=q^{(k-1)n^2} \alpha_n,\\
&\beta^{(k-1)}_n=\sum_{n_{k-1}=0}^{n}\sum_{n_{k-2}=0}^{n_{k-1}}\sum_{n_{k-3}=0}^{n_{k-2}} \cdots \sum_{n_{1}=0}^{n_{2}} \frac{q^{n_1^2+n_2^2+\cdots +n_{k-1}^2}}{(q)_{n-n_{k-1}(q)_{n_{k-1}-n_{k-2}}\cdots (q)_{n_2-n_1}}}\beta_{n_1}.
\end{align*}
Substituting the above bilateral Bailey pair into \eqref{bbaileyiden1}, we have that
\[
\sum_{n\geq 0} \sum_{n_{k-1}=0}^{n}\sum_{n_{k-2}=0}^{n_{k-1}}\cdots \sum_{n_{1}=0}^{n_{2}} \frac{q^{n^2+n_1^2+n_2^2+\cdots +n_{k-1}^2}}{(q)_{n-n_{k-1}(q)_{n_{k-1}-n_{k-2}}\cdots (q)_{n_2-n_1}}}\beta_{n_1} = \frac{1}{(q)_\infty}\sum_{n=-\infty}^\infty q^{kn^2}\alpha_n.
\]
Then by replacing the variables on the left hand side with $n\mapsto n_1$ and $n_j\mapsto n_{k+1-j}$ for $1\leq j \leq k-1$, we arrive at \eqref{multisum1}. Simliarly, \eqref{multisum2} can be proved by applying the weak form of bilateral Bailey lemma \eqref{bbaileyiden4}.
\end{proof}

In 2000, Bressoud, Ismail and Stanton \cite{BIS00} gave a  Bailey lattice for the classic Bailey pairs. Then in \cite{JFI11} and \cite{Jou10}, the authors showed that the Bailey  lattice in \cite{BIS00} can be extended to  the following bilateral versions.

\begin{lemma}{\rm \cite[(2.10)]{JFI11}}\label{wnewbbf1}
If $(\alpha_n,\,\beta_n)$ is a bilateral Bailey pair related to $a$, then so is $(\alpha'_n,\,\beta'_n)$, where
\begin{align*}
&\alpha'_n(a,q)=\frac{1+a}{1+aq^{2n}}q^n\alpha_n(a^2,q^2),\\
&\beta'_n(a,q)=\sum_{j=-\infty}^n\frac{(-a)_{2j}q^j}{(q^2;q^2)_{n-j}}\beta_j(a^2,q^2).
\end{align*}
\end{lemma}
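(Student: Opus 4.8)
The plan is to verify directly that $(\alpha'_n,\beta'_n)$ obeys the defining relation \eqref{bbp} relative to $a$ in base $q$, that is, that
\[
\beta'_n(a,q)=\sum_{j=-\infty}^{n}\frac{\alpha'_j(a,q)}{(q)_{n-j}(aq)_{n+j}}.
\]
By hypothesis $(\alpha_n(a^2,q^2),\beta_n(a^2,q^2))$ is a bilateral Bailey pair relative to $a^2$ in base $q^2$, so I would begin by inserting its defining relation
\[
\beta_j(a^2,q^2)=\sum_{r=-\infty}^{j}\frac{\alpha_r(a^2,q^2)}{(q^2;q^2)_{j-r}(a^2q^2;q^2)_{j+r}}
\]
into the definition of $\beta'_n(a,q)$, turning $\beta'_n$ into a double sum over $j$ and $r$.

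Next I would interchange the order of summation. For each fixed $r$ the index $j$ then runs over the finite range $r\le j\le n$, while the outer sum over $r$ still extends to $-\infty$; under the absolute-convergence hypotheses stated in the lemma this interchange is legitimate. Collecting the coefficient of each $\alpha_r(a^2,q^2)$ and comparing with the target $\sum_j\alpha'_j/\big((q)_{n-j}(aq)_{n+j}\big)$, the whole claim reduces to the single finite evaluation
\[
\sum_{j=r}^{n}\frac{(-a)_{2j}\,q^{j}}{(q^2;q^2)_{n-j}(q^2;q^2)_{j-r}(a^2q^2;q^2)_{j+r}}=\frac{(1+a)\,q^{r}}{(1+aq^{2r})(q)_{n-r}(aq)_{n+r}}.
\]
Writing $j=r+m$, $N=n-r$, $b=aq^{2r}$ and using the splittings $(-a)_{2j}=(-a;q^2)_j(-aq;q^2)_j$ and $(a^2q^2;q^2)_k=(aq)_k(-aq)_k$, this collapses after cancellation to the compact terminating identity
\[
\sum_{m=0}^{N}\frac{(-b)_{2m}\,q^{m}}{(q^2;q^2)_{N-m}(q^2;q^2)_m(b^2q^2;q^2)_m}=\frac{1}{(q)_N\,(bq)_N}.
\]

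The crux is this last summation, which I expect to be the main obstacle. Its summand is naturally a series in base $q^2$ whereas the closed form is a product in base $q$, so it is a genuinely quadratic $q$-summation rather than a plain $q$-Chu--Vandermonde sum. I would prove it by clearing the factor $(q^2;q^2)_N$ to expose $q^2$-binomial coefficients and identifying the right-hand side with the value of the terminating base-$q$ series ${}_2\phi_1(q^{-N},-b;bq;q,-q^{N+1})$, which is summable in closed form by $q$-Chu--Vandermonde; the asserted identity is then the corresponding quadratic transformation linking the base-$q^2$ and base-$q$ series. Alternatively it can be obtained by induction on $N$ using the $q^2$-Pascal recurrence. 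Keeping the three shifts $aq$, $-aq$, $aq^{2r}$ straight in this step is the only delicate point; everything else is formal bookkeeping. Once the finite identity is in hand the bilateral statement follows with no extra effort, since it is exactly the finite summation underlying the unilateral Bressoud--Ismail--Stanton Bailey lattice \cite{BIS00}, and passing to the bilateral setting merely enlarges the range of the outer sum over $r$, whose additional terms are carried along unchanged.
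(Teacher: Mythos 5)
The paper offers no proof of this lemma to compare against: it is imported verbatim from Dousse--Jouhet--Konan \cite[(2.10)]{JFI11}, so your argument has to stand on its own. Structurally, it does. Your reduction is correct: inserting the defining relation of $(\alpha_n(a^2,q^2),\beta_n(a^2,q^2))$, interchanging the sums, and using the splittings
\[
(-a)_{2r+2m}=(-a)_{2r}(-aq^{2r})_{2m},\qquad
(a^2q^2;q^2)_{2r+m}=(a^2q^2;q^2)_{2r}(a^2q^{4r+2};q^2)_m,\qquad
\frac{(-a)_{2r}}{(a^2q^2;q^2)_{2r}}=\frac{1+a}{(1+aq^{2r})(aq)_{2r}},
\]
does collapse the claim to exactly your terminating identity with $b=aq^{2r}$, $N=n-r$; I verified the collapse and checked the identity itself for small $N$. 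Moreover, since these splittings remain valid for negative $r$ under the convention $(x)_n=(x)_\infty/(xq^n)_\infty$ (no $(q)_{\mathrm{negative}}$ factors arise), your closing remark is right that the bilateral case costs nothing beyond the finite identity underlying the unilateral lattice of \cite{BIS00}.

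The soft spot is that the crux identity is asserted rather than established, and your first route is circular as stated. Recognizing the right-hand side as $(-b)^{-N}\,{}_2\phi_1(q^{-N},-b;bq;q,-q^{N+1})$ via $q$-Chu--Vandermonde is correct (indeed $(q^2;q^2)_N/(q)_N=(-q)_N$), but the ``quadratic transformation linking the base-$q^2$ and base-$q$ series'' that you then invoke \emph{is} the identity you are trying to prove; you would need to cite an actual transformation of Verma--Jain type (these are the summations from which \cite{BIS00} derive their change-of-base lemmas) rather than treat its existence as automatic. Your fallback route also needs more care than advertised: the natural $q^2$-Pascal split of $1/(q^2;q^2)_{N-m}$ turns the induction step into the evaluation of the companion sum
\[
\sum_{m=0}^{N}\frac{(-b)_{2m}\,q^{2N-m}}{(q^2;q^2)_{N-m}(q^2;q^2)_m(b^2q^2;q^2)_m}
=\frac{q^N(1+b-bq^N)}{(q)_N(bq)_N},
\]
which is not the original identity with smaller $N$, so a single straightforward induction does not close; you must run a coupled induction on both identities (or produce a $q$-Zeilberger certificate). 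None of this is fatal --- the reduction, which is the real content of your argument, is sound, and the terminating identity is true and standard --- but as written the decisive summation is named, not proved.
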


\begin{lemma}{\rm \cite[(3.2)]{Jou10}}\label{wnewbbf2}
If $(\alpha_n,\beta_n)$ is a bilateral Bailey pair related to $a$, then so is $(\alpha'_n,\,\beta'_n)$, where
\begin{align*}
\alpha'_n(a,q)&=\alpha_n(a^2,q^2),\\
\beta'_n(a,q)&=\sum_{j=-\infty}^n\frac{(-aq)_{2j}q^{n-j}}{(q^2;q^2)_{n-j}}\beta_j(a^2,q^2).
\end{align*}
\end{lemma}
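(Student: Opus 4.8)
The aim is to verify that the pair $(\alpha'_n,\beta'_n)$ obeys the defining relation \eqref{bbp} relative to $a$ with base $q$, that is, that
\[
\beta'_n(a,q)=\sum_{k=-\infty}^{n}\frac{\alpha'_k(a,q)}{(q)_{n-k}(aq)_{n+k}}.
\]
The hypothesis that $(\alpha_n,\beta_n)$ is a bilateral Bailey pair relative to $a$, read as an identity in its arguments, supplies the companion expansion at the doubled parameters: $(\alpha_n(a^2,q^2),\beta_n(a^2,q^2))$ is a bilateral Bailey pair relative to $a^2$ with base $q^2$, so that
\[
\beta_j(a^2,q^2)=\sum_{k=-\infty}^{j}\frac{\alpha_k(a^2,q^2)}{(q^2;q^2)_{j-k}(a^2q^2;q^2)_{j+k}}.
\]
The plan is the coefficient-matching argument that is standard for Bailey chains and lattices.

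First I would substitute this expansion of $\beta_j(a^2,q^2)$ into the definition of $\beta'_n(a,q)$ and interchange the order of the resulting double sum; under the convergence conditions that render the double series absolutely convergent the rearrangement is legitimate. Gathering the coefficient of $\alpha_k(a^2,q^2)=\alpha'_k(a,q)$ for each integer $k\le n$, the claimed Bailey-pair relation for $(\alpha'_n,\beta'_n)$ becomes equivalent to the family of terminating identities
\[
\sum_{j=k}^{n}\frac{(-aq)_{2j}\,q^{n-j}}{(q^2;q^2)_{n-j}(q^2;q^2)_{j-k}(a^2q^2;q^2)_{j+k}}=\frac{1}{(q)_{n-k}(aq)_{n+k}}.
\]

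To evaluate the inner sum I would set $m=n-k$ and $b=aq^{2k}$, reindex by $i=j-k$, and simplify the summand with the elementary factorizations $(a^2q^2;q^2)_N=(aq;q)_N(-aq;q)_N$ and $(-aq;q)_{2N}=(-aq;q^2)_N(-aq^2;q^2)_N$; these cancel the stray base-$q$ factors against $(a^2q^2;q^2)_{j+k}$ and, after pulling out the $i$-free factor $1/(aq;q)_{2k}$, reduce the whole lemma to the single base-$q^2$ summation
\[
\sum_{i=0}^{m}{m\brack i}_{q^2}\frac{(-bq;q^2)_i(-bq^2;q^2)_i}{(b^2q^2;q^2)_i}\,q^{m-i}=\frac{(-q)_m}{(bq)_m}.
\]
This is the heart of the matter and the main obstacle. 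It is a terminating ${}_3\phi_1$ series in base $q^2$, and because base $q$ and base $q^2$ are genuinely intertwined here (the product $(-bq;q^2)_i(-bq^2;q^2)_i=(-bq;q)_{2i}$ does not collapse to a single $q^2$-factorial), it cannot be read off directly from the $q$-Chu--Vandermonde or $q$-Pfaff--Saalsch\"utz sum as in the classical Bailey-chain case, but instead calls for a quadratic-type evaluation. I would establish it either by matching it to a known quadratic summation after the reductions above, or, failing a clean match, by induction on $m$: the case $m=0$ is immediate, and the inductive step would follow from a two-term recurrence in $m$ satisfied by both sides, the delicate point being the interaction of the weight $q^{m-i}$ with the $q^2$-binomial coefficient. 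Once this identity is secured, reversing the summation interchange returns the defining relation \eqref{bbp} for $(\alpha'_n,\beta'_n)$, completing the proof.
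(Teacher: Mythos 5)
The paper itself gives no proof of this lemma (it is quoted directly from \cite[(3.2)]{Jou10}), so your attempt has to be measured against the argument in the cited literature, whose skeleton you have in fact reproduced. Your formal steps are all correct: substituting the base-$(a^2,q^2)$ Bailey relation into $\beta'_n$, interchanging the sums, and extracting the coefficient of $\alpha_k(a^2,q^2)$ does reduce the lemma to
\[
\sum_{j=k}^{n}\frac{(-aq)_{2j}\,q^{n-j}}{(q^2;q^2)_{n-j}\,(q^2;q^2)_{j-k}\,(a^2q^2;q^2)_{j+k}}
=\frac{1}{(q)_{n-k}\,(aq)_{n+k}},
\]
and the substitution $m=n-k$, $b=aq^{2k}$, $i=j-k$, together with the factorizations $(-aq)_{2j}=(-aq)_{2k}(-bq)_{2i}$, $(a^2q^2;q^2)_{j+k}=(aq)_{2k}(-aq)_{2k}(b^2q^2;q^2)_i$ and $(q^2;q^2)_m=(q)_m(-q)_m$, correctly turns this into your base-$q^2$ summation
\[
\sum_{i=0}^{m}{m\brack i}_{q^2}\frac{(-bq;q^2)_i(-bq^2;q^2)_i}{(b^2q^2;q^2)_i}\,q^{m-i}=\frac{(-q)_m}{(bq)_m}.
\]
I checked both reductions in detail; they are exact, and the target identity is true.

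The genuine gap is that this summation---the only step carrying analytic content, as you yourself say---is never proved. Route (a) names no formula and exhibits no match, and route (b) is not merely unfinished but obstructed as described: writing $G_m(b)$ for your left-hand side multiplied by $(bq)_m$, so that the claim reads $G_m(b)=(-q)_m$, the $q^2$-Pascal recurrence ${m\brack i}_{q^2}={m-1\brack i}_{q^2}+q^{2(m-i)}{m-1\brack i-1}_{q^2}$ gives
\[
G_m(b)=q(1-bq^m)\,G_{m-1}(b)+bq^{2m}\,G_{m-1}(bq)+H_{m-1}(bq),
\]
where $H_{m-1}$ is the analogous sum with weight $q^{3(m-1-i)}$ in place of $q^{m-1-i}$; so the obvious induction produces a three-term relation involving a foreign sum, not the two-term recurrence you predicted, and the same happens with the other Pascal identity. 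The honest completion is your route (a), made concrete: the coefficient identity above is exactly the summation underlying the unilateral $q^2\to q$ change-of-base lemma of Bressoud, Ismail and Stanton \cite{BIS00}, of which the present lemma is the bilateral extension, and it is established there by means of known terminating quadratic summations. Since, as your own change of variables shows, the identity depends on $k$ only through $m=n-k\geq 0$ and the free parameter $b=aq^{2k}$, and both sides are rational in $b$, its validity in the unilateral range $0\leq k\leq n$ (with $a$ generic) extends verbatim to all integers $k\leq n$; this observation is precisely how \cite{Jou10} and \cite{JFI11} pass from \cite{BIS00} to the bilateral lattices. With that citation, or a reproduction of that summation's proof, your argument becomes a complete proof; as written, the crux of the lemma is assumed rather than proved.
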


\section{The key bilateral Bailey pair}

In this section, we construct our key bilateral Bailey pair and by inserting it into different forms of the bilateral Bailey lemmas, we obtain a number of Rogers--Ramanujan type identities, identities on Appell--Lerch series and identities of  Andrews--Gordon type.

Firstly, let us  give the key bilateral Bailey pair.

\begin{lemma}\label{basicbbp}
We have that
\begin{align}
(\alpha_n(1,q),\ \beta_n(1,q))=\Big((-1)^nz^nq^{\binom{n}{2}}, \ \frac{(z, q/z)_n}{(q)_{2n}} \Big)\label{originalbbp}
\end{align}
forms  a  bilateral Bailey pair with respective to $a=1$.
\end{lemma}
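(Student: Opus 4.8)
The plan is to verify the defining relation directly. Since the pair is stated relative to $a=1$, by \eqref{basicbbf} it suffices to check, for every integer $n\geq 0$, that
\begin{align*}
\frac{(z,q/z)_n}{(q)_{2n}}=\sum_{j=-n}^{n}\frac{(-1)^j z^j q^{\binom{j}{2}}}{(q)_{n-j}(q)_{n+j}};
\end{align*}
for $n<0$ the proposed $\beta_n$ vanishes under the usual convention $1/(q)_m=0$ when $m<0$, which matches the value $0$ forced by \eqref{basicbbf}, so nothing further is needed there. Multiplying the right-hand side by $(q)_{2n}$ and recalling ${2n\brack n+j}=\frac{(q)_{2n}}{(q)_{n+j}(q)_{n-j}}$, I would reduce the claim to the polynomial identity $(z,q/z)_n=\sum_{j=-n}^n{2n\brack n+j}(-1)^jz^jq^{\binom{j}{2}}$.

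The core of the argument is to recognize this last sum as an instance of the $q$-binomial theorem \eqref{q-binomial-thm}. First I would substitute $k=n+j$, so that $k$ runs from $0$ to $2n$, and then use the elementary exponent identity $\binom{k-n}{2}=\binom{k}{2}-nk+\binom{n+1}{2}$ to rewrite $q^{\binom{j}{2}}=q^{\binom{k-n}{2}}$. Pulling the $k$-independent factors $(-1)^{-n}z^{-n}q^{\binom{n+1}{2}}$ out of the sum leaves exactly $\sum_{k=0}^{2n}{2n\brack k}(-1)^k(zq^{-n})^k q^{\binom{k}{2}}$, which by \eqref{q-binomial-thm} with $N=2n$ and base point $zq^{-n}$ collapses to $(zq^{-n};q)_{2n}$. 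Thus the whole right-hand side becomes $(-1)^n z^{-n} q^{\binom{n+1}{2}}(zq^{-n};q)_{2n}$.

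The remaining and most delicate step is the product bookkeeping needed to identify this expression with $(z,q/z)_n$. I would write $(zq^{-n};q)_{2n}=\prod_{m=-n}^{n-1}(1-zq^m)$ and split the product at $m=0$: the factors with $0\leq m\leq n-1$ already give $(z;q)_n$, while for the factors with $-n\leq m\leq -1$ I would extract $-zq^{-l}$ from each term via $1-zq^{-l}=-zq^{-l}(1-q^l/z)$, producing $(-z)^n q^{-\binom{n+1}{2}}(q/z;q)_n$. The prefactor $(-1)^nz^{-n}q^{\binom{n+1}{2}}$ then cancels $(-z)^nq^{-\binom{n+1}{2}}$ exactly, leaving $(z;q)_n(q/z;q)_n=(z,q/z)_n$, which completes the verification. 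The main obstacle is precisely this reflection of the negatively-indexed factors together with the careful tracking of the powers of $-z$ and $q^{\binom{n+1}{2}}$ so that the prefactors cancel cleanly; everything else is a routine application of the $q$-binomial theorem.
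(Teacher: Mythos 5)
Your proof is correct and takes essentially the same route as the paper's: both rest on the polynomial identity $(z,q/z)_n=\sum_{j=-n}^{n}{2n\brack n+j}(-1)^jz^jq^{\binom{j}{2}}$, proved via the $q$-binomial theorem \eqref{q-binomial-thm} together with the index shift $j\leftrightarrow n+j$ and the product manipulation relating $(z,q/z)_n$ to $(-1)^nz^{-n}q^{\binom{n+1}{2}}(zq^{-n};q)_{2n}$ — you merely run the computation from the sum toward the product, while the paper runs it from the product toward the sum. Your explicit check that $\beta_n$ vanishes for $n<0$ (matching \eqref{basicbbf}) is a routine point the paper leaves implicit.
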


\begin{proof} Consider the product of two $q$-shifted factorials $(z,q/z)_n$, it can be rewritten as follows
\begin{align*}
(z,q/z)_n &=(z)_n(-1)^nz^{-n}q^{\binom{n+1}{2}}(zq^{-n})_n\\
&=(-1)^nz^{-n}q^{\binom{n+1}{2}}(zq^{-n})_{2n}.
\end{align*}
By applying the $q$-binomial theorem \eqref{q-binomial-thm}, the above expression can be expanded as follows
\begin{align*}
(-1)^nz^{-n}q^{\binom{n+1}{2}}\sum_{j=0}^{2n}{2n \brack j}(-1)^jq^{\binom{j}{2}}(zq^{-n})^j.
\end{align*}
Letting $j\mapsto n+j$ and simplifying, it turns to be
\begin{align}
(z,q/z)_n=\sum_{j=-n}^n {2n \brack n+j} (-1)^jq^{\binom{j}{2}}z^j,\label{basicB}
\end{align}
which completes the proof by noting the definition of bilateral Bailey pair \eqref{bbp} with $a=1$.
\end{proof}

In this paper, we will show how this key bilateral Bailey pair can be used in deriving new identities. Note that \eqref{basicB} can be seen as a truncated form  of Jacobi's triple product identity \eqref{JTP} in which by letting $n\rightarrow \infty$, it will lead to Jacobi's triple product identity.
Moreover, the bilateral Bailey pair \eqref{originalbbp} is also a special case of the Bailey pair given by \cite[P. 366]{Chu01} with $\delta=0$. If $z=1$ or $z=q$, it leads to a unit bilateral Bailey pair $\big((-1)^nq^{\binom{n}{2}}, \delta_{n,0}\big)$.

 By using the weak form \eqref{bbaileyiden1}   and with the help of Jacobi's triple product identity \eqref{JTP}, we  obtain the following Rogers--Ramanujan type identity directly.

\begin{theorem}\label{oriden}
We have
\begin{align}
&\sum_{n=0}^\infty\frac{(z, q/z)_n}{(q)_{2n}}q^{n^2}=\frac{(zq, q^2/z, q^3; q^3)_\infty}{(q)_\infty}.\label{weak1}
\end{align}
\end{theorem}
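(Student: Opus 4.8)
The plan is to apply the weak form of the bilateral Bailey lemma \eqref{bbaileyiden1} directly to the key bilateral Bailey pair \eqref{originalbbp} established in Lemma~\ref{basicbbp}. Substituting $\beta_n(1,q) = (z,q/z)_n/(q)_{2n}$ into the left-hand side of \eqref{bbaileyiden1} reproduces the left-hand side of \eqref{weak1} verbatim, so the entire content of the theorem lies in evaluating the right-hand side $\frac{1}{(q)_\infty}\sum_{n=-\infty}^\infty q^{n^2}\alpha_n(1,q) = \frac{1}{(q)_\infty}\sum_{n=-\infty}^\infty (-1)^n z^n q^{n^2 + \binom{n}{2}}$ in closed product form.

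The key step is to recognize this bilateral theta series as an instance of Jacobi's triple product identity \eqref{JTP} with a rescaled base. First I would rewrite the exponent via the elementary identity $n^2 + \binom{n}{2} = 3\binom{n}{2} + n$, valid since both sides equal $\tfrac{3n^2-n}{2}$. This regroups the summand as $(-1)^n (q^3)^{\binom{n}{2}} (zq)^n$, exposing $q^3$ as the natural base. Applying \eqref{JTP} with $q$ replaced by $q^3$ and $z$ replaced by $zq$ then gives $\sum_{n=-\infty}^\infty (-1)^n (q^3)^{\binom{n}{2}}(zq)^n = (zq,\, q^3/(zq),\, q^3;\, q^3)_\infty = (zq,\, q^2/z,\, q^3;\, q^3)_\infty$, where I have simplified $q^3/(zq)=q^2/z$. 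Dividing by $(q)_\infty$ yields exactly the right-hand side of \eqref{weak1}.

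Since every step reduces to a single substitution, I do not anticipate a genuine obstacle here; the only point requiring a moment's care is selecting the correct change of base in Jacobi's triple product, namely verifying the rearrangement $n^2+\binom{n}{2}=3\binom{n}{2}+n$ so that the resulting theta function matches the modulus-$3$ product on the right. One should also note in passing that the interchange implicit in \eqref{bbaileyiden1} is legitimate because the hypotheses of Lemma~\ref{bbpinfinite} (absolute convergence of the relevant series for $|q|<1$) are met by this pair, so no additional justification beyond the lemma is needed.
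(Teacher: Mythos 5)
Your proposal is correct and follows exactly the paper's own route: the authors likewise insert the key bilateral Bailey pair \eqref{originalbbp} into the weak form \eqref{bbaileyiden1} and evaluate the resulting bilateral theta sum $\sum_{n=-\infty}^{\infty}(-1)^n z^n q^{(3n^2-n)/2}$ via Jacobi's triple product identity \eqref{JTP} with $q\mapsto q^3$, $z\mapsto zq$. Your explicit verification of the exponent identity $n^2+\binom{n}{2}=3\binom{n}{2}+n$ simply spells out the step the paper leaves implicit.
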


Note that by substituting $q\mapsto q^2, z\mapsto -aq$ into \eqref{weak1}, it turns to be Entry 5.3.1 as given in Ramanujan's ``Lost" Notebook \cite[P. 99]{AB09}.

It is interesting to see that by taking $q\mapsto q^m, z\mapsto\pm q^a$ in \eqref{weak1} respectively, we derive the following pair of identities.

\begin{corollary}\label{Bbpi1}
For $m, a\in\mathbb{N}$, $m\geq1, 0\leq a\leq m$, we have
\begin{align}
&\sum_{n=0}^{\infty}\frac{(q^a, q^{m-a}; q^m)_n}{(q^m; q^m)_{2n}}q^{mn^2}=\frac{(q^{m+a}, q^{2m-a}, q^{3m}; q^{3m})_\infty}{(q^m; q^m)_\infty},\label{bbpi4}\\
&\sum_{n=0}^{\infty}\frac{(-q^a, -q^{m-a}; q^m)_n}{(q^m; q^m)_{2n}}q^{mn^2}=\frac{(-q^{m+a}, -q^{2m-a}, q^{3m}; q^{3m})_\infty}{(q^m; q^m)_\infty}. \label{bbpi1}
\end{align}
\end{corollary}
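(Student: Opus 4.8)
The plan is to obtain both identities as direct specializations of Theorem~\ref{oriden}. Equation~\eqref{weak1} holds for $|q|<1$ and every nonzero $z$: its left-hand side is a Laurent series in $z$ and its right-hand side is analytic on the punctured plane, and the two agree as functions of $z$. Consequently one is free to perform any admissible substitution in the parameters $q$ and $z$, and the whole content of the corollary is bookkeeping of such a substitution.

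First I would replace $q$ by $q^m$ throughout \eqref{weak1}. Since $m\geq 1$ and $|q|<1$ we have $|q^m|<1$, so this base change is legitimate and yields
\[
\sum_{n=0}^\infty \frac{(z, q^m/z; q^m)_n}{(q^m;q^m)_{2n}}\, q^{mn^2} = \frac{(zq^m, q^{2m}/z, q^{3m}; q^{3m})_\infty}{(q^m;q^m)_\infty}.
\]
Next, to prove \eqref{bbpi4} I would set $z = q^a$, and to prove \eqref{bbpi1} I would set $z = -q^a$. In the first case the numerator factor $q^m/z$ becomes $q^{m-a}$, while on the right $zq^m = q^{m+a}$ and $q^{2m}/z = q^{2m-a}$; in the second case the identical computation produces $-q^{m-a}$, $-q^{m+a}$ and $-q^{2m-a}$. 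Matching these against the stated right-hand sides gives both identities at once.

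The only points requiring care are the admissibility of the substitution and the role of the hypothesis $0\le a\le m$. The denominator $(q^m;q^m)_{2n}$ never vanishes for $|q|<1$ and $z=\pm q^a\neq 0$, so no singularity is introduced; the constraint $0\le a\le m$ guarantees that each of the exponents $a$, $m-a$, $m+a$, $2m-a$ is nonnegative, so every $q$-shifted factorial is a genuine product over $|q|<1$ and the series converges. I expect no real obstacle here, since everything rests on Theorem~\ref{oriden}. As a consistency check one may inspect the degenerate cases $a=0$ and $a=m$, where $(q^a;q^m)_n$ or $(q^{m-a};q^m)_n$ contains the vanishing factor $1-q^0$ for $n\geq 1$: the left-hand side collapses to its $n=0$ term, namely $1$, while on the right the product $(q^m,q^{2m},q^{3m};q^{3m})_\infty$ recombines over the residues $0,1,2 \pmod 3$ into $(q^m;q^m)_\infty$, so the right-hand side equals $1$ as well.
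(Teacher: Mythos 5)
Your proof is correct and is essentially identical to the paper's own derivation: the paper obtains Corollary~\ref{Bbpi1} precisely by substituting $q\mapsto q^m$ and $z\mapsto\pm q^a$ into \eqref{weak1} of Theorem~\ref{oriden}. Your additional remarks on admissibility of the substitution and the degenerate cases $a=0$, $a=m$ are sound but go beyond what the paper records.
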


When $m$ is even and $a$ is odd, it is obvious to see that \eqref{bbpi4} and \eqref{bbpi1} are equivalent.
It is notable that these two identities unify many known Rogers--Ramanujan type identities.
For example, when $m=2, a=1$ in \eqref{bbpi4}, it can be found in \cite[(4.3)]{Sun01} or Entry 5.3.3 in  \cite[P. 102]{AB09}
\begin{align*}
\sum_{n=0}^{\infty}\frac{(q; q^2)^2_n}{(q^2; q^2)_{2n}}q^{2n^{2}}=\frac{(q^3, q^3, q^6; q^6)_\infty}{(q^2; q^2)_\infty}.
\end{align*}
Moreover, in \eqref{bbpi4}, if we take $m=3, a=1$, it can be found in \cite[(42)]{Sla52}; when $m=4, a=1$, it also can be seen in \cite[(53)]{Sla52}; when $m=5, a=1$, it is the one in \cite[(2.15.5)]{Sil03}; when $m=5, a=2$, it in the form of \cite[(2.15.6)]{Sil03}; when $m=6, a=1$, it also can be found in \cite[(135)]{Chu01}. For \eqref{bbpi1}, if we take $m=1, a=0$, it in the form of \cite[(A.6)]{Sil02}; when $m=3, a=1$, it can be found in \cite[(86)]{Chu01}.

Besides, these two identities also lead to  more identities with modulo  $3m$, which may be added into the list of Rogers--Ramanujan type identities with modulo divisible by 3.  For examples, when $m=7$ and $a=1$ in \eqref{bbpi4}, it leads to \eqref{RRmod21}.
When $m=8$ and $a=3$ in \eqref{bbpi1}, it reduces to \eqref{RRmod24}.
When $m=12$ and  $a=5$ in \eqref{bbpi4}, it leads to the  Rogers--Ramanujan type identity
\begin{align*}
\sum_{n=0}^{\infty}\frac{(q^5, q^7; q^{12})_n}{(q^{12}; q^{12})_{2n}}q^{12n^{2}}=\frac{(q^{17}, q^{19}, q^{36}; q^{36})_\infty}{(q^{12}; q^{12})_\infty}.
\end{align*}
If we let $m=13, a=2$ in \eqref{bbpi1}, it gives
\begin{align*}
\sum_{n=0}^{\infty}\frac{(-q^2, -q^{11}; q^{13})_n}{(q^{13}; q^{13})_{2n}}q^{13n^{2}}=\frac{(-q^{15}, -q^{24}, q^{39}; q^{39})_\infty}{(q^{13}; q^{13})_\infty}.
\end{align*}

Next, by inserting the bilateral Bailey pair \eqref{originalbbp} into \eqref{bbaileyiden4}, we are lead to the following result.

\begin{theorem}
We have
\begin{align}
\sum_{n=0}^\infty\frac{(z, q/z, -q^{\frac{1}{2}})_n}{(q)_{2n}}q^{\frac{ n^2}{2}}=\frac{(-q^{\frac{1}{2}})_\infty
(zq^{\frac{1}{2}}, q^{\frac{3}{2}}/z, q^2; q^2)_\infty}
{(q)_\infty}.\label{weak4}
\end{align}
\end{theorem}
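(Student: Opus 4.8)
The plan is to insert the key bilateral Bailey pair \eqref{originalbbp} directly into the weak form \eqref{bbaileyiden4} of the bilateral Bailey lemma, exactly as the text announces, and then to recognise the resulting bilateral sum on the right-hand side as an instance of Jacobi's triple product identity \eqref{JTP}.

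First I would substitute $\beta_n(1,q)=(z,q/z)_n/(q)_{2n}$ into the left-hand side of \eqref{bbaileyiden4}. Since the factor $(-q^{1/2})_n$ multiplies $\beta_n$ and the weight $q^{n^2/2}$ is already present, the left-hand side collapses immediately to $\sum_{n\ge0}\frac{(z,q/z,-q^{1/2})_n}{(q)_{2n}}q^{n^2/2}$, which is precisely the left-hand side of \eqref{weak4}. No real work is needed here.

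The substantive step is the right-hand side. Substituting $\alpha_n(1,q)=(-1)^nz^nq^{\binom{n}{2}}$, the bilateral sum in \eqref{bbaileyiden4} becomes $\sum_{n=-\infty}^{\infty}(-1)^nz^nq^{\frac{1}{2}n^2+\binom{n}{2}}$. I would combine the two quadratic exponents via $\binom{n}{2}=\frac{n^2-n}{2}$, obtaining the single exponent $n^2-\frac{n}{2}$, and then regroup as $q^{n^2-\frac{n}{2}}=q^{n(n-1)}\cdot q^{n/2}=(q^2)^{\binom{n}{2}}q^{n/2}$. Absorbing the factor $q^{n/2}$ into the base of the geometric term, the sum reads $\sum_{n=-\infty}^{\infty}(-1)^n(zq^{1/2})^n(q^2)^{\binom{n}{2}}$, which is exactly the right-hand side of \eqref{JTP} under the substitutions $q\mapsto q^2$ and $z\mapsto zq^{1/2}$. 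Applying \eqref{JTP} produces the product $(zq^{1/2},\,q^{3/2}/z,\,q^2;q^2)_\infty$, where I have simplified $q^2/(zq^{1/2})=q^{3/2}/z$. Multiplying by the prefactor $(-q^{1/2})_\infty/(q)_\infty$ carried along in \eqref{bbaileyiden4} then yields the right-hand side of \eqref{weak4}, completing the argument.

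The only genuine obstacle is the bookkeeping in collapsing the two exponents that are quadratic in $n$ and in executing the base change $q\mapsto q^2$ correctly, so that the bilateral series is matched against \eqref{JTP}; once the summand is rewritten as $(-1)^n(zq^{1/2})^n(q^2)^{\binom{n}{2}}$ the identification is immediate. Convergence of the bilateral series is assured by the $q^{n^2}$-type decay of the general term, so the absolute-convergence hypotheses of Lemmas \ref{thm:bilatbaileylemma} and \ref{bbpinfinite} underlying \eqref{bbaileyiden4} are satisfied.
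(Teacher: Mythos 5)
Your proposal is correct and follows exactly the route the paper intends: substitute the key bilateral Bailey pair \eqref{originalbbp} into the weak form \eqref{bbaileyiden4}, combine the exponents $q^{n^2/2}\cdot q^{\binom{n}{2}} = (q^2)^{\binom{n}{2}}q^{n/2}$, and evaluate the resulting bilateral sum by Jacobi's triple product identity \eqref{JTP} with $q\mapsto q^2$, $z\mapsto zq^{1/2}$. The bookkeeping (including the simplification $q^2/(zq^{1/2})=q^{3/2}/z$) checks out, so nothing further is needed.
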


 Let $q\mapsto q^2, z\mapsto -aq$ in \eqref{weak4}, it implies Entry 5.3.5 as given in   \cite{AB09}.

When we take $q\mapsto q^m, z\mapsto\pm q^a$ in \eqref{weak4}, respectively, we obtain the following pair of identities.

\begin{corollary}\label{Bbpi125}
For $m, a\in\mathbb{N}$, $m\geq1, 0\leq a\leq m$, we have
\begin{align}
&\sum_{n=0}^{\infty}\frac{(q^a, q^{m-a}, -q^{\frac{m}{2}}; q^{m})_n}{(q^{m}; q^{m})_{2n}}q^{\frac{mn^2}{2}}
=\frac{(-q^{\frac{m}{2}}; q^{m})_\infty}{(q^{m}; q^{m})_\infty}\big(q^{\frac{m}{2}+a}, q^{\frac{3m}{2}-a}, q^{2m}; q^{2m}\big)_\infty ,\label{bbpi5-2}\\
&\sum_{n=0}^{\infty}\frac{(-q^a, -q^{m-a}, -q^{\frac{m}{2}}; q^{m})_n}{(q^{m}; q^{m})_{2n}}q^{\frac{mn^2}{2}}
=\frac{(-q^{\frac{m}{2}}; q^{m})_\infty}{(q^{m}; q^{m})_\infty}\big(-q^{\frac{m}{2}+a}, -q^{\frac{3m}{2}-a}, q^{2m}; q^{2m}\big)_\infty.\label{bbpi2-2}
\end{align}
\end{corollary}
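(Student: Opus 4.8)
The plan is to obtain both identities as direct specializations of Theorem~\eqref{weak4}, which holds for arbitrary $z$. Concretely, I would substitute $q\mapsto q^m$ together with $z\mapsto q^a$ into \eqref{weak4} to produce \eqref{bbpi5-2}, and $q\mapsto q^m$ together with $z\mapsto -q^a$ to produce \eqref{bbpi2-2}. Since \eqref{weak4} is an established identity valid for every $z$ (with $|q|<1$), the entire argument reduces to verifying that these two substitutions reproduce the displayed left- and right-hand sides.

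The core of the verification is careful bookkeeping of each $q$-shifted factorial. On the left-hand side, after $q\mapsto q^m$ the factor $(z,q/z,-q^{1/2})_n$ becomes $(z,q^m/z,-q^{m/2};q^m)_n$; setting $z=q^a$ then gives $q^m/z=q^{m-a}$, while $z=-q^a$ gives $q^m/z=-q^{m-a}$. The denominator $(q)_{2n}$ becomes $(q^m;q^m)_{2n}$ and the exponent $q^{n^2/2}$ becomes $q^{mn^2/2}$, matching the summands of both identities. The hypothesis $0\leq a\leq m$ keeps all arguments in the intended range and guarantees the sums are well defined.

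The one point requiring attention is that the right-hand side of \eqref{weak4} mixes two bases: the prefactor $(-q^{1/2})_\infty$ carries base $q$, whereas the triple product $(zq^{1/2},q^{3/2}/z,q^2;q^2)_\infty$ carries base $q^2$. Under $q\mapsto q^m$ these become base $q^m$ and base $q^{2m}$ respectively, so I would track them separately: $(-q^{1/2})_\infty\mapsto(-q^{m/2};q^m)_\infty$, and $(zq^{1/2},q^{3/2}/z,q^2;q^2)_\infty\mapsto(zq^{m/2},q^{3m/2}/z,q^{2m};q^{2m})_\infty$. Specializing $z=\pm q^a$ then turns $zq^{m/2}$ into $\pm q^{m/2+a}$ and $q^{3m/2}/z$ into $\pm q^{3m/2-a}$, yielding exactly the products in \eqref{bbpi5-2} and \eqref{bbpi2-2}.

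Because each step is a mechanical substitution into an identity already proved for all $z$, there is no genuine analytic obstacle here; the only real risk is a bookkeeping slip in distinguishing the $q^m$ and $q^{2m}$ bases on the right-hand side, which the separate tracking above is designed to forestall. I therefore expect the proof to consist of little more than exhibiting the two substitutions and noting that they carry \eqref{weak4} termwise onto \eqref{bbpi5-2} and \eqref{bbpi2-2}.
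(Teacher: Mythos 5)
Your proposal is correct and is exactly the paper's argument: the corollary is obtained by substituting $q\mapsto q^m$ and $z\mapsto\pm q^a$ into the identity \eqref{weak4}, with the same bookkeeping of the base-$q^m$ prefactor $(-q^{m/2};q^m)_\infty$ versus the base-$q^{2m}$ triple product. Nothing further is needed.
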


Obviously, when $m=4k, k>0$, and $a$ is odd, \eqref{bbpi5-2} and \eqref{bbpi2-2} are equivalent. For \eqref{bbpi5-2}, if we set $m=2, a=1$, it can also be found in \cite[(4)]{Sla52} or \cite[(5.2a)]{Sun01}:
\begin{align*}
\sum_{n=0}^\infty\frac{(q; q^2)_n}{(q^4; q^4)_{n}}q^{n^2}=(-q; q^2)_\infty(q^2; q^4)_\infty.
\end{align*}
Furthermore, in \eqref{bbpi5-2}, when $m=4$ and $a=1$, it in the form of \cite[(2.8.12)]{Sil03}; if $m=6$ and $a=1$, it can be seen in \cite[(117)]{Chu01}; by taking $m=6$ and $a=2$, it turns to be \cite[(2.12.11)]{Sil03}. For \eqref{bbpi2-2}, when $m=2$ and $a=1$, it has the form of \cite[(5.2e)]{Sun01}; for $m=6$ and $a=1$, it can be found in \cite[(117)]{Chu01}; if $m=6$ and $a=2$, it is the identity \cite[(2.12.11)]{Sil03}.

These two identities as given in Corollary \ref{Bbpi125} also can imply more identities with modulo $2m$, which may be added into the list of Rogers--Ramanujan identities with even modulo. For example, when $m=8$ and $a=1$ in \eqref{bbpi5-2}, it gives the identity as follows
\begin{align*}
\sum_{n=0}^{\infty}\frac{(q,-q^4,q^7;q^{8})_n}{(q^{8};q^{8})_{2n}}q^{4n^2}=\frac{(-q^4; q^8)_\infty}{(q^8; q^8)_{\infty}}(q^5, q^{11}, q^{16}; q^{16})_\infty,
\end{align*}
when $m=10$ and $ a=4$ in \eqref{bbpi2-2}, we get
\begin{align*}
\sum_{n=0}^{\infty}\frac{(-q^4,-q^5,-q^6;q^{10})_n}{(q^{10};q^{10})_{2n}}q^{5n^2}=
\frac{(-q^5;q^{10})_\infty}{(q^{10};q^{10})_\infty}(-q^9,-q^{11},q^{20};q^{20})_\infty.
\end{align*}

Then by using the weak form  \eqref{bbaileyiden5}, we obtain the following identity.

\begin{theorem} We have
\begin{align}
2\sum_{n=0}^\infty(-1)^n\frac{(z, q/z)_n}{(q^2; q^2)_{n}}=\frac{(q; q^2)_\infty(-z, -q/z, q; q)_\infty}
{(q^2; q^2)_\infty}.\label{weak5}
\end{align}
\end{theorem}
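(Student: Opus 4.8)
The plan is to feed the key bilateral Bailey pair \eqref{originalbbp} of Lemma \ref{basicbbp}, namely $\alpha_n(1,q)=(-1)^n z^n q^{\binom{n}{2}}$ and $\beta_n(1,q)=\frac{(z,q/z)_n}{(q)_{2n}}$, directly into the weak form \eqref{bbaileyiden5} of the bilateral Bailey lemma. After substitution the left-hand side becomes $2\sum_{n=0}^{\infty}(-1)^n(q;q^2)_n\frac{(z,q/z)_n}{(q)_{2n}}$, while the right-hand side becomes $\frac{(q;q^2)_\infty}{(q^2;q^2)_\infty}\sum_{n=-\infty}^{\infty}(-1)^n(-1)^n z^n q^{\binom{n}{2}}$. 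The entire argument then reduces to two elementary simplifications, one on each side.

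On the left-hand side I would exploit the standard splitting of a $q$-shifted factorial of even index into its odd and even parts, that is $(q)_{2n}=(q;q^2)_n\,(q^2;q^2)_n$, obtained by collecting the factors $(1-q^{2k-1})$ and $(1-q^{2k})$ for $1\le k\le n$ separately. This makes the factor $(q;q^2)_n$ cancel, leaving $(q;q^2)_n/(q)_{2n}=1/(q^2;q^2)_n$, so the left-hand side collapses to $2\sum_{n=0}^{\infty}(-1)^n\frac{(z,q/z)_n}{(q^2;q^2)_n}$, which is precisely the left-hand side of \eqref{weak5}.

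On the right-hand side I would first note that $(-1)^n\alpha_n=(-1)^n(-1)^n z^n q^{\binom{n}{2}}=z^n q^{\binom{n}{2}}$, since $(-1)^{2n}=1$, so the bilateral sum is $\sum_{n=-\infty}^{\infty} z^n q^{\binom{n}{2}}$. To evaluate this as a product I would invoke Jacobi's triple product identity \eqref{JTP} after the substitution $z\mapsto -z$: the left-hand side of \eqref{JTP} then reads $\sum_{n=-\infty}^{\infty}(-1)^n q^{\binom{n}{2}}(-z)^n=\sum_{n=-\infty}^{\infty} z^n q^{\binom{n}{2}}$, and its right-hand side becomes $(-z,-q/z,q;q)_\infty$. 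Multiplying by the prefactor $\frac{(q;q^2)_\infty}{(q^2;q^2)_\infty}$ yields exactly the right-hand side of \eqref{weak5}.

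There is no genuinely hard step here; the result is a direct specialization. The only two points that demand care are the parity factorization $(q)_{2n}=(q;q^2)_n(q^2;q^2)_n$ used to clear the denominator on the left, and the sign bookkeeping $(-1)^n\alpha_n=z^n q^{\binom{n}{2}}$ together with the $z\mapsto -z$ substitution needed to align the resulting theta sum with the orientation of \eqref{JTP}. Getting the signs consistent in this last invocation of the triple product is the most error-prone part of the computation, and I would verify it by checking the $n=0$ and $n=1$ terms against the product side before declaring the identity proved.
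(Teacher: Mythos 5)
Your proof is correct and follows exactly the paper's route: the paper obtains \eqref{weak5} by inserting the key bilateral Bailey pair \eqref{originalbbp} into the weak form \eqref{bbaileyiden5} and evaluating the bilateral sum via Jacobi's triple product identity \eqref{JTP}, which is precisely what you do. Your explicit handling of the parity factorization $(q)_{2n}=(q;q^2)_n(q^2;q^2)_n$ and the substitution $z\mapsto -z$ in \eqref{JTP} simply fills in the details the paper leaves implicit.
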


Note  that by setting  $q\mapsto q^2, z\mapsto -aq$ in \eqref{weak5}, it just leads to the identity as given in \cite[Entry 5.3.10]{AB09}. When we take $q\mapsto q^2, z\mapsto -q$ in \eqref{weak5}, it reduces to \cite[(5.4c)]{Sun01}.

By inserting the bilateral Bailey pair \eqref{originalbbp} into \eqref{bbaileyiden3}, it gives the following three--term identity.

\begin{theorem} We have
\begin{align}
\sum_{n=0}^\infty\frac{(z, q/z, -q)_n}{(q)_{2n}}q^{\binom{n}{2}}=\frac{(-q)_\infty}
{(q)_\infty}\big((z, q^2/z, q^2; q^2)_\infty+(zq, q/z, q^2; q^2)_\infty\big).\label{weak3}
\end{align}
\end{theorem}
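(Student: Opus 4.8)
The plan is to feed the key bilateral Bailey pair \eqref{originalbbp} into the weak form \eqref{bbaileyiden3} of the bilateral Bailey lemma and then evaluate the resulting bilateral sum on the right-hand side by two applications of Jacobi's triple product identity \eqref{JTP}. The two-term structure on the right of \eqref{weak3} should emerge directly from the factor $(1+q^n)$ appearing in \eqref{bbaileyiden3}.

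First I would handle the left side, which requires essentially no work: substituting $\beta_n=(z,q/z)_n/(q)_{2n}$ into $\sum_{n\geq0} q^{\binom{n}{2}}(-q)_n\beta_n$ reproduces $\sum_{n\geq0}\frac{(z,q/z,-q)_n}{(q)_{2n}}q^{\binom{n}{2}}$ after merging the factor $(-q)_n$ into the triple product, so the left side is already the left side of \eqref{weak3}.

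The substance lies in the right side. Substituting $\alpha_n=(-1)^nz^nq^{\binom{n}{2}}$ and using $2\binom{n}{2}=n^2-n$, the bilateral sum becomes $\sum_{n}(1+q^n)(-1)^nz^nq^{n^2-n}$. I would split this as $S_1+S_2$, where $S_1=\sum_{n}(-1)^nz^nq^{n^2-n}$ comes from the ``$1$'' and $S_2=\sum_{n}(-1)^nz^nq^{n^2}$ from the ``$q^n$''. The key step is to rewrite the Gaussian exponents so as to match \eqref{JTP} with base $q^2$: since $q^{n^2-n}=(q^2)^{\binom{n}{2}}$ and $q^{n^2}=(q^2)^{\binom{n}{2}}q^n$, I obtain $S_1=\sum_{n}(-1)^nz^n(q^2)^{\binom{n}{2}}$ and $S_2=\sum_{n}(-1)^n(zq)^n(q^2)^{\binom{n}{2}}$. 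Applying \eqref{JTP} with $q\mapsto q^2,\ z\mapsto z$ yields $S_1=(z,q^2/z,q^2;q^2)_\infty$, and with $q\mapsto q^2,\ z\mapsto zq$ yields $S_2=(zq,q^2/(zq),q^2;q^2)_\infty=(zq,q/z,q^2;q^2)_\infty$. Multiplying $S_1+S_2$ by the prefactor $(-q)_\infty/(q)_\infty$ then gives precisely the right side of \eqref{weak3}.

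The only genuinely delicate point I anticipate is spotting the correct base change to $q^2$ that converts each of the two pieces into a triple product; once the identity $2\binom{n}{2}=n^2-n$ is recorded, both invocations of \eqref{JTP} are mechanical. Convergence of the bilateral series and the legitimacy of splitting off $(1+q^n)$ are ensured by the $q^{n^2}$-type Gaussian decay of the summands for $|q|<1$ and $z\neq0$, so no further analytic care is needed.
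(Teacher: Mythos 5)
Your proposal is correct and follows exactly the paper's route: the paper likewise obtains \eqref{weak3} by inserting the key bilateral Bailey pair \eqref{originalbbp} into the weak form \eqref{bbaileyiden3} and evaluating the resulting bilateral sum with Jacobi's triple product identity \eqref{JTP}. The paper states this in one line without details; your splitting of $(1+q^n)$ into the two pieces $S_1$ and $S_2$ and the base change to $q^2$ via $2\binom{n}{2}=n^2-n$ is precisely the computation being suppressed there.
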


When $q\mapsto q^4, z\mapsto q$ in \eqref{weak3}, it can be found in \cite[ (3.8.1)]{Sil03}
\begin{align*}
\sum_{n=0}^\infty\frac{(-q^4; q^4)_{n}(q; q^2)_{2n}}{(q^4; q^4)_{2n}}q^{4\binom{n}{2}}=\frac{(-q^4; q^4)_\infty}{(q^4; q^4)_\infty}\left((q, q^7, q^8; q^8)_\infty+(q^3, q^5, q^8; q^8)_\infty\right).
\end{align*}

From \eqref{weak3}, we also can obtain more identities. For example, letting  $q\mapsto q^7, z\mapsto\pm q$ in \eqref{weak3}, respectively, it gives that

\begin{align*}
&\sum_{n=0}^{\infty} \frac{(\pm q, \pm q^6, -q^7; q^7)_{n}}{(q^7; q^7)_{2n}}q^{7\binom{n}{2}}\\
&\qquad\qquad=\frac{(-q^7; q^7)_\infty}{(q^7; q^7)_\infty}\big((\pm q, \pm q^{13}, q^{14}; q^{14})_\infty+(\pm q^6, \pm q^8, q^{14}; q^{14})_\infty\big).
\end{align*}

Furthermore, we see that by taking the bilateral Bailey pair \eqref{originalbbp} into \eqref{bbaileyiden2}, some identities on Appell--Lerch series can be  derived.
Recall that the Appell--Lerch series are of the following form
\begin{equation}\label{appler}
\sum_{n=-\infty}^\infty \frac{(-1)^{\ell n}q^{\ell n(n+1)/2}b^n}{1-aq^n},
\end{equation}
which was first studied by Appell \cite{App846} and Lerch \cite{Ler92}. After multiplying the series \eqref{appler} by the factor $a^{\ell/2}$ and viewing it as function in the variables $a$, $b$ and $q$, it is also refereed as an Appell function of level $\ell$.

\begin{theorem} We have
\begin{align}
\sum_{n=0}^\infty\frac{(-1, z, q/z)_n}{(q)_{2n}}q^{\binom{n+1}{2}}=\frac{2(-q)_\infty}
{(q)_\infty}\sum_{n=-\infty}^{\infty}\frac{(-1)^nz^nq^{n^2}}{1+q^n}.\label{weak2}
\end{align}
\end{theorem}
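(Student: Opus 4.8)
The plan is to specialize the fifth weak form of the bilateral Bailey lemma, namely \eqref{bbaileyiden2}, to the key bilateral Bailey pair \eqref{originalbbp}. Since \eqref{bbaileyiden2} has already been established in Lemma \ref{weakbbailey} as the case $x\to\infty$, $y\mapsto -1$ of Lemma \ref{bbpinfinite} at $a=1$, the only work left is to substitute $\alpha_n(1,q)=(-1)^n z^n q^{\binom{n}{2}}$ and $\beta_n(1,q)=(z,q/z)_n/(q)_{2n}$ into both sides and simplify. Thus the result is a direct specialization rather than a fresh argument, and the RHS will be recognizable as an Appell--Lerch series in the sense of \eqref{appler}.

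First I would treat the left-hand side. Inserting $\beta_n=(z,q/z)_n/(q)_{2n}$ into $\sum_{n\geq 0}q^{\binom{n+1}{2}}(-1)_n\beta_n$ and merging the factor $(-1)_n$ into the numerator through $(-1,z,q/z)_n=(-1)_n(z,q/z)_n$ produces $\sum_{n=0}^\infty\frac{(-1,z,q/z)_n}{(q)_{2n}}q^{\binom{n+1}{2}}$, which is exactly the left-hand side of \eqref{weak2}. No further manipulation is needed here.

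Next I would handle the right-hand side. Substituting $\alpha_n=(-1)^n z^n q^{\binom{n}{2}}$ into $\frac{2(-q)_\infty}{(q)_\infty}\sum_{n\in\mathbb{Z}}\frac{q^{\binom{n+1}{2}}}{1+q^n}\alpha_n$ yields the summand $\frac{(-1)^n z^n q^{\binom{n+1}{2}+\binom{n}{2}}}{1+q^n}$. The one genuine computation is the exponent collapse $\binom{n+1}{2}+\binom{n}{2}=\tfrac{n(n+1)}{2}+\tfrac{n(n-1)}{2}=n^2$, after which the summand becomes $\frac{(-1)^n z^n q^{n^2}}{1+q^n}$ and the right-hand side of \eqref{weak2} follows verbatim.

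The only point that could be called an obstacle is verifying that the convergence hypotheses underlying Lemma \ref{bbpinfinite} genuinely hold for this pair, so that the bilateral series on the right converges and the passage $n\to\infty$ used to derive \eqref{bbaileyiden2} is legitimate. This is harmless: the Gaussian factor $q^{n^2}$ dominates both $z^n$ and the factor $1/(1+q^n)$, which grows only like $q^{-n}$ as $n\to-\infty$, so $\frac{q^{n^2}}{1+q^n}\sim q^{\,n^2-n}\to 0$ in that limit; moreover $1+q^n$ never vanishes for $|q|<1$. Hence $\sum_{n\in\mathbb{Z}}\frac{(-1)^n z^n q^{n^2}}{1+q^n}$ converges absolutely for $z\neq 0$, and the specialization is fully justified.
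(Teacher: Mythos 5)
Your proposal is correct and follows exactly the paper's route: the paper obtains \eqref{weak2} precisely by inserting the bilateral Bailey pair \eqref{originalbbp} into the weak form \eqref{bbaileyiden2}, with the same exponent collapse $\binom{n+1}{2}+\binom{n}{2}=n^2$ giving the Appell--Lerch series on the right. Your explicit verification of absolute convergence of the bilateral sum is a small, correct addition that the paper leaves implicit.
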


In fact, by taking $q\mapsto q^2, z\mapsto -q$ in \eqref{weak2}, it reduces to  \cite[(6.3)]{Sun01}
\begin{align*}
\sum_{n=0}^{\infty} \frac{(-1, -q, -q; q^2)_n}{(q^2; q^2)_{2n}}q^{n(n+1)}=\frac{2(-q^2; q^2)_\infty}{(q^2; q^2)_\infty}\sum_{n=-\infty}^{\infty}\frac{q^{n(2n+1)}}{1+q^{2n}}.
\end{align*}

Finally, by taking the bilateral Bailey pair \eqref{originalbbp} into Lemma \ref{kbblit}, we obtain the following two identities of  Andrews--Gordon type.

\begin{theorem}\label{iterate}
We have
\begin{align}
&\sum_{n_1\geq\cdots\geq n_k\geq 0}\frac{q^{n_1^2+\cdots+n_k^2}(z, q/z)_{n_k}}{(q)_{n_1-n_2}\cdots (q)_{n_{k-1}-n_k}(q)_{2n_k}}=\frac{(q^kz, q^{k+1}/z, q^{2k+1}; q^{2k+1})_\infty}{(q)_\infty},\label{AG1}\\
&\sum_{n_1\geq\cdots\geq n_k\geq 0}\frac{q^{n_1^2+n_2^2+\cdots+n_k^2}(-q, z,q^2/z;q^2)_{n_k}}{(q^2;q^2)_{n_1-n_2}\cdots (q^2;q^2)_{n_{k-1}-n_k}(q^2;q^2)_{2n_k}}\nonumber\\
&\qquad\qquad\qquad\qquad\qquad=\frac{(-q;q^2)_\infty(q^{k}z, q^{k+2}/z, q^{2(k+1)};q^{2(k+1)})_\infty}{(q^2;q^2)_\infty}.\label{AG2}
\end{align}
\end{theorem}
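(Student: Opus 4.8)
The plan is to feed the key bilateral Bailey pair \eqref{originalbbp} into the two multisum evaluations of Lemma \ref{kbblit} and then collapse each resulting bilateral $\alpha$-sum into an infinite product by means of Jacobi's triple product identity \eqref{JTP}. In each case the left-hand side will reproduce the claimed multisum verbatim once $\beta_{n_k}$ is written out, so all the genuine work is concentrated on the right-hand side. For \eqref{AG1} I would substitute $\beta_{n_k}=\frac{(z,q/z)_{n_k}}{(q)_{2n_k}}$ and $\alpha_n=(-1)^nz^nq^{\binom{n}{2}}$ directly into \eqref{multisum1}. The left side then becomes the stated sum, while the right side equals $\frac{1}{(q)_\infty}\sum_{n=-\infty}^{\infty}(-1)^nz^nq^{kn^2+\binom{n}{2}}$. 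The key algebraic observation is the exponent identity
\begin{align*}
kn^2+\binom{n}{2}=(2k+1)\binom{n}{2}+kn,
\end{align*}
so that the summand rewrites as $(-1)^n(zq^k)^n\big(q^{2k+1}\big)^{\binom{n}{2}}$. Applying \eqref{JTP} in base $q^{2k+1}$ with variable $w=zq^k$ gives $(zq^k,q^{k+1}/z,q^{2k+1};q^{2k+1})_\infty$, which is exactly the desired numerator.

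For \eqref{AG2} the half-integer powers in \eqref{multisum2} force a change of base: I would first replace $q$ by $q^2$ throughout \eqref{multisum2} and use the key pair \eqref{originalbbp} in base $q^2$, namely $\alpha_n(1,q^2)=(-1)^nz^nq^{n^2-n}$ and $\beta_n(1,q^2)=\frac{(z,q^2/z;q^2)_n}{(q^2;q^2)_{2n}}$. Under $q\mapsto q^2$ the factor $(-q^{1/2})_{n_k}$ becomes $(-q;q^2)_{n_k}$ and each $q^{\frac12 n_i^2}$ becomes $q^{n_i^2}$, so the left-hand side collapses to the stated sum. On the right side one is left with $\frac{(-q;q^2)_\infty}{(q^2;q^2)_\infty}\sum_{n=-\infty}^{\infty}(-1)^nz^nq^{(k+1)n^2-n}$, and the analogous exponent identity $(k+1)n^2-n=2(k+1)\binom{n}{2}+kn$ lets me apply \eqref{JTP} in base $q^{2(k+1)}$ with $w=zq^k$, producing $(zq^k,q^{k+2}/z,q^{2(k+1)};q^{2(k+1)})_\infty$ and hence the claimed product.

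The only delicate point is the bookkeeping in these two exponent identities: one must choose the triple-product base ($q^{2k+1}$ in the first case, $q^{2(k+1)}$ in the second) so that the quadratic-plus-linear exponent $kn^2+\binom{n}{2}$, respectively $(k+1)n^2-n$, splits cleanly as $Q^{\binom{n}{2}}$ times a geometric factor $q^{kn}$ that can be absorbed into the base variable $w=zq^k$. Once the base is correctly identified, everything else is a direct verification that the shifted products $Q/w$ equal $q^{k+1}/z$ and $q^{k+2}/z$ respectively. I expect no convergence difficulties, since Lemma \ref{kbblit} already packages the iteration of the bilateral Bailey chain, and the decay of $\alpha_n$ like $q^{\binom{n}{2}}$ guarantees absolute convergence of the relevant bilateral sums.
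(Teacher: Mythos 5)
Your proposal is correct and follows exactly the paper's route: the paper derives Theorem \ref{iterate} by inserting the key bilateral Bailey pair \eqref{originalbbp} into Lemma \ref{kbblit} (using \eqref{multisum1} for \eqref{AG1} and \eqref{multisum2}, with the base change $q\mapsto q^2$, for \eqref{AG2}) and then evaluating the resulting bilateral sums by Jacobi's triple product identity \eqref{JTP}. Your exponent identities $kn^2+\binom{n}{2}=(2k+1)\binom{n}{2}+kn$ and $(k+1)n^2-n=2(k+1)\binom{n}{2}+kn$ check out, and they supply precisely the details the paper leaves implicit.
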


Note that by substituting $z=1$ or $z=q$ in \eqref{AG1}, it reduces to
\begin{align*}
\sum_{n_1\geq\cdots\geq n_{k-1}\geq 0}\frac{q^{n_1^2+\cdots+n_{k-1}^2}}{(q)_{n_1-n_2}\cdots (q)_{n_{k-2}-n_{k-1}}(q)_{n_{k-1}}}
=\frac{(q^k,q^{k+1},q^{2k+1};q^{2k+1})_\infty}{(q)_\infty},
\end{align*}
which is a special case of Andrews--Gordon identity \eqref{eq:AGri} with $i=k$ and also can be found in \cite[(2.6)]{ASW01}. If we take $z=q^{i-k}$ or $z=q^{k-i+1}$ in \eqref{AG1}, it gives
\begin{align*}
\sum_{n_1\geq\cdots\geq n_{k}\geq 0}\frac{(q^{i-k},q^{k-i+1})_{n_k}q^{n_1^2+\cdots+n_k^2}}{(q)_{n_1-n_2}\cdots (q)_{n_{k-1}-n_{k}}(q)_{2n_k}}
=\frac{(q^{i},q^{2k-i+1},q^{2k+1};q^{2k+1})_\infty}{(q)_\infty},
\end{align*}
which gives another expansion formula for the right hand side of Andrews--Gordon identity \eqref{eq:AGri}. Moreover, for identity \eqref{AG1}, it can reduce to Theorem \ref{oriden} when $k=1$, and it also leads to \cite[(4.9c)]{Sun01} when $q\mapsto q^2$ and $z\mapsto q$.

For identity \eqref{AG2}, when $k=1, q\mapsto q^{\frac{1}{2}}$, it reduces to \eqref{weak4}. Letting $z\mapsto 1$ or $z \mapsto q^2$, we obtain the special case of \eqref{eq:AGev} with $i=k$.
When  $z\mapsto q^{i-k}$ or $z\mapsto q^{k+2-i}$ with $1\leq i \leq k+1$, we get the following Andrews--Gordon type identity
\begin{align*}
&\sum_{n_1\geq\cdots\geq n_k\geq 0}\frac{q^{n_1^2+n_2^2+\cdots+n_k^2}(q^{i-k},q^{k+2-i},-q;q^2)_{n_k}}
{(q^2;q^2)_{n_1-n_2}\cdots(q^2;q^2)_{n_{k-1}-n_k}(q^2;q^2)_{2n_k}}\\
&\qquad\qquad\qquad\qquad\qquad\qquad\qquad=\frac{(-q;q^2)_\infty
(q^i,q^{2k-i+2},q^{2(k+1)};q^{2(k+1)})_\infty}{(q^2;q^2)_\infty},
\end{align*}
which extends the identity \eqref{eq:AGev} of Andrews to be without the restriction $i \equiv k \pmod{2}$.

\section{The bilateral Bailey lattices}

In this section, by applying the bilateral Bailey pair \eqref{originalbbp} combined with the bilateral Bailey lattices as given in Lemma \ref{wnewbbf1} due to Dousse, Jouhet and  Konan \cite{JFI11}, and Lemma \ref{wnewbbf2} due to  Jouhet \cite{Jou10}, we obtain identities on Appell--Lerch series and also identities of  Rogers--Ramanujan type.

\begin{theorem}
We have
\begin{align}
\sum_{n=0}^\infty\frac{(z,q^2/z;q^2)_n}{(1+q^{2n})(q)_{2n}}q^n =\frac{(-q)_\infty}{(q)_\infty}
\sum_{n=-\infty}^\infty\frac{(-1)^nz^nq^{n^2}}{1+q^{2n}}.
\end{align}
\end{theorem}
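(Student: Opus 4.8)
The plan is to feed the key bilateral Bailey pair \eqref{originalbbp} into the bilateral Bailey lattice of Lemma \ref{wnewbbf1} and then extract the identity by letting the truncation index tend to infinity. Since Lemma \ref{wnewbbf1} manufactures a pair relative to $a$ in base $q$ out of an input pair that enters through the arguments $(a^2,q^2)$, I would take $a=1$ and use as input the key pair with $q$ replaced by $q^2$, which is a bilateral Bailey pair relative to $1$ in base $q^2$ by Lemma \ref{basicbbp}. Thus the input is $\alpha_n(1,q^2)=(-1)^nz^nq^{n^2-n}$ (using $2\binom{n}{2}=n^2-n$) together with $\beta_n(1,q^2)=(z,q^2/z;q^2)_n/(q^2;q^2)_{2n}$.

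Applying Lemma \ref{wnewbbf1} with $a=1$ then produces a new bilateral Bailey pair $(\alpha'_n,\beta'_n)$ relative to $a=1$ in base $q$, namely
\[
\alpha'_n=\frac{2}{1+q^{2n}}\,q^n(-1)^nz^nq^{n^2-n}=\frac{2(-1)^nz^nq^{n^2}}{1+q^{2n}},\qquad
\beta'_n=\sum_{j=-\infty}^{n}\frac{(-1;q)_{2j}\,q^j}{(q^2;q^2)_{n-j}}\,\frac{(z,q^2/z;q^2)_j}{(q^2;q^2)_{2j}}.
\]
Here the terms with $j<0$ drop out because $1/(q^2;q^2)_{2j}=0$ for such $j$, so the inner sum effectively runs over $j\ge 0$. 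The decisive structural feature is that the Appell--Lerch denominator $1+q^{2n}$ is created automatically by the lattice and already sits inside $\alpha'_n$, so no further weighting on the $\alpha$-side is required.

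The identity itself then comes from evaluating $\lim_{n\to\infty}\beta'_n$ in two ways. On one hand, since $(\alpha'_n,\beta'_n)$ is a bilateral Bailey pair relative to $a=1$ it satisfies \eqref{basicbbf}; letting $n\to\infty$ there and using $(q)_{n\pm j}\to(q)_\infty$ gives $\lim_{n\to\infty}\beta'_n=\frac{1}{(q)_\infty^2}\sum_{j}\alpha'_j=\frac{2}{(q)_\infty^2}\sum_{j=-\infty}^{\infty}\frac{(-1)^jz^jq^{j^2}}{1+q^{2j}}$. On the other hand, letting $n\to\infty$ directly in the lattice formula for $\beta'_n$ gives $\lim_{n\to\infty}\beta'_n=\frac{1}{(q^2;q^2)_\infty}\sum_{j\ge0}(-1;q)_{2j}q^j(z,q^2/z;q^2)_j/(q^2;q^2)_{2j}$. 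I would then use the elementary simplifications $(q^2;q^2)_{2j}=(q)_{2j}(-q;q)_{2j}$ and $(-1;q)_{2j}=2(-q;q)_{2j-1}$ to get $(-1;q)_{2j}/(q^2;q^2)_{2j}=2/\big((1+q^{2j})(q)_{2j}\big)$, so that this second evaluation becomes $\frac{2}{(q^2;q^2)_\infty}\sum_{j\ge0}(z,q^2/z;q^2)_j\,q^j/\big((1+q^{2j})(q)_{2j}\big)$, i.e. $\frac{2}{(q^2;q^2)_\infty}$ times the left-hand side of the theorem. Equating the two evaluations, cancelling the factor $2$, and reducing $(q^2;q^2)_\infty/(q)_\infty^2=(-q)_\infty/(q)_\infty$ via $(q^2;q^2)_\infty=(q)_\infty(-q)_\infty$ yields exactly the claimed identity, with the Appell--Lerch series surviving on the right.

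The main technical point to nail down will be the interchange of the limit $n\to\infty$ with the summations in both evaluations. In the first evaluation one must pass the limit through the bilateral sum $\sum_j\alpha'_j/\big((q)_{n-j}(q)_{n+j}\big)$; this is routine by dominated convergence, since $\alpha'_j$ carries the Gaussian factor $q^{j^2}$ that controls both tails (against $1/(1+q^{2j})$ as $j\to-\infty$) while $1/\big((q)_{n-j}(q)_{n+j}\big)$ stays bounded by a constant multiple of $1/(q)_\infty^2$. In the second evaluation the summands decay like $q^j$ with the remaining factors bounded, so the interchange is immediate. As a consistency check, running the identical $n\to\infty$ argument on the original pair \eqref{originalbbp} reproduces Jacobi's triple product \eqref{JTP}, which confirms that the limiting extraction is being applied correctly.
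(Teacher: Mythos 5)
Your proposal is correct and follows exactly the paper's own route: substitute the key pair \eqref{originalbbp} with $q\mapsto q^2$ into the bilateral Bailey lattice of Lemma \ref{wnewbbf1} with $a=1$ to produce the pair \eqref{newbbaileypair1}, then let $n\to\infty$ in the defining relation \eqref{basicbbf} and compare the two evaluations. Your write-up simply makes explicit the details the paper leaves implicit, namely the simplification $(-1)_{2j}/(q^2;q^2)_{2j}=2/\big((1+q^{2j})(q)_{2j}\big)$, the reduction $(q^2;q^2)_\infty/(q)_\infty^2=(-q)_\infty/(q)_\infty$, and the convergence justification for interchanging limit and summation.
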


\begin{proof}
First, setting $q\mapsto q^2$ in \eqref{originalbbp}, and substituting it into the bilateral Bailey lattice as given in Lemma \ref{wnewbbf1} with $a=1$, we obtain the following  bilateral Bailey pair
\begin{align}
(\alpha_n(1,q),\ \beta_n(1,q))=\Big(\frac{2(-1)^nz^nq^{n^2}}{1+q^{2n}},\ \sum_{j=0}^n\frac{(-1)_{2j}(z,q^2/z;q^2)_j}{(q^2;q^2)_{n-j}(q^2;q^2)_{2j}}q^j\Big).\label{newbbaileypair1}
\end{align}
Then inserting the above bilateral Bailey pair into  \eqref{basicbbf} with $a=1$, and letting  $n\rightarrow\infty$, we complete the proof.
\end{proof}

By employing  the bilateral Bailey lattice in Lemma \ref{wnewbbf1}, we derive the following result.

\begin{theorem}\label{itelattice1}
For a bilateral Bailey pair $(\alpha_n(a,q),\ \beta_n(a,q))$ and $k\geq 2$, we have
\begin{align}
&\sum_{n_1\geq\cdots\geq n_k\geq -\infty}(x,y)_{n_1}\Big(\frac{aq}{xy}\Big)^{n_1}\prod_{i=0}^{k-2}
\frac{(-a^{2^i};q^{2^i})_{2n_{i+2}}}
{(q^{2^{i+1}};q^{2^{i+1}})_{n_{i+1}-n_{i+2}}}q^{2^in_{i+2}}
\beta_{n_k}(a^{2^{k-1}},q^{2^{k-1}})\\
&\quad =\frac{(aq/x,aq/y)_{\infty}}{(aq,aq/xy)_{\infty}}\sum_{n=-\infty}^{\infty} \frac{(x,y)_n}{(aq/x,aq/y)_{n}} \Big(\frac{aq}{x y}\Big)^{n}\prod_{i=0}^{k-2}\frac{1+a^{2^{i}}}{1+(aq^{2n})^{2^{i}}}q^{2^in}
\alpha_{n}(a^{2^{k-1}},q^{2^{k-1}}). \nonumber
\end{align}
\end{theorem}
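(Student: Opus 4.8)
The plan is to build a bilateral Bailey pair relative to $a$ by iterating the lattice of Lemma \ref{wnewbbf1} exactly $k-1$ times, and then to feed the resulting pair into BMS' bilateral Bailey lemma (Lemma \ref{bbpinfinite}). The crucial structural observation is that the right-hand sides of the lattice transformation in Lemma \ref{wnewbbf1} evaluate the input pair at $(a^2,q^2)$, so each application lowers the base level by one; to reach $\alpha_n(a^{2^{k-1}},q^{2^{k-1}})$ on the right I would apply the lattice successively at the base levels $(a^{2^i},q^{2^i})$ for $i=k-2,k-3,\dots,0$, the innermost application acting on the given pair evaluated at $(a^{2^{k-1}},q^{2^{k-1}})$ and the outermost returning a pair relative to $a$ itself.

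First I would set up the induction on the $\alpha$-side. Writing Lemma \ref{wnewbbf1} with $(a,q)$ replaced by $(a^{2^i},q^{2^i})$ multiplies the $\alpha$-component by $\frac{1+a^{2^i}}{1+a^{2^i}q^{2^{i+1}n}}\,q^{2^in}$, and since $a^{2^i}q^{2^{i+1}n}=(aq^{2n})^{2^i}$, composing these factors for $i=0,\dots,k-2$ produces exactly the product $\prod_{i=0}^{k-2}\frac{1+a^{2^i}}{1+(aq^{2n})^{2^i}}q^{2^in}$ standing in front of $\alpha_n(a^{2^{k-1}},q^{2^{k-1}})$ on the right-hand side. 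On the $\beta$-side, the same substitution contributes at each level a summation over a fresh index together with the factor $\frac{(-a^{2^i};q^{2^i})_{2j}\,q^{2^ij}}{(q^{2^{i+1}};q^{2^{i+1}})_{n-j}}$; labelling the successive summation variables $n_1\ge n_2\ge\cdots\ge n_k$ and matching level $i$ with the index pair $(n_{i+1},n_{i+2})$ yields the nested sum $\prod_{i=0}^{k-2}\frac{(-a^{2^i};q^{2^i})_{2n_{i+2}}}{(q^{2^{i+1}};q^{2^{i+1}})_{n_{i+1}-n_{i+2}}}q^{2^in_{i+2}}\,\beta_{n_k}(a^{2^{k-1}},q^{2^{k-1}})$, which is precisely the $\beta'$-component whose existence Lemma \ref{wnewbbf1} guarantees after $k-1$ iterations.

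With the iterated pair $(\alpha'_n,\beta'_n)$ in hand, I would finish by substituting it into Lemma \ref{bbpinfinite}: its left-hand side $\sum_n (x,y)_n(aq/xy)^n\beta'_n$ is the multisum on the left of the claimed identity, and its right-hand side is $\frac{(aq/x,aq/y)_\infty}{(aq,aq/xy)_\infty}\sum_n \frac{(x,y)_n}{(aq/x,aq/y)_n}(aq/xy)^n\alpha'_n$, which becomes the stated right-hand side once $\alpha'_n$ is replaced by the product computed above. The base case $k=2$ is a single application and is a useful sanity check: the product degenerates to the lone $i=0$ factor and the multisum to a double sum $n_1\ge n_2$.

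The genuinely routine part is the algebra of the base-$2$ exponents, where the identities $a^{2^i}q^{2^{i+1}n}=(aq^{2n})^{2^i}$ do all the work. The step requiring the most care is the bookkeeping in the induction — keeping track of which base $q^{2^i}$ and which index pair $(n_{i+1},n_{i+2})$ each level contributes — and, more substantively, justifying that the $k-1$ doubly infinite summations may be interchanged and that the final passage through Lemma \ref{bbpinfinite} is legitimate. This is exactly where the convergence proviso attached to Lemma \ref{wnewbbf1} enters, so I would record explicitly the conditions on $\alpha_n,\beta_n$ and on $x,y$ under which all the relevant doubly infinite series converge absolutely, so that Fubini's theorem permits the reordering at each stage.
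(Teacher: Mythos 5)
Your proposal is correct and follows essentially the same route as the paper: iterate the bilateral Bailey lattice of Lemma \ref{wnewbbf1} exactly $k-1$ times (using $a^{2^i}q^{2^{i+1}n}=(aq^{2n})^{2^i}$ to compose the $\alpha$-factors and nesting the $\beta$-sums over $n_1\geq\cdots\geq n_k$), then substitute the resulting pair into BMS' bilateral Bailey lemma (Lemma \ref{bbpinfinite}). The only difference is cosmetic — the paper labels the nested indices in the opposite order and relabels afterward, and it leaves the convergence/Fubini justification implicit where you make it explicit.
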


\begin{proof} Firstly, for a given bilateral Bailey pair $(\alpha_{n}(a,q),\ \beta_{n}(a,q))$, we apply the bilateral Bailey lattice as given in Lemma \ref{wnewbbf1} $k-1$ times with $k\geq 2$ iteratively, it gives the following bilateral Bailey pair
\begin{align*}
&\alpha_{n}^{(k-1)}(a,q)=\prod_{i=0}^{k-2}\frac{1+a^{2^{i}}}{1+(aq^{2n})^{2^{i}}}q^{2^in}
\alpha_{n}(a^{2^{k-1}},q^{2^{k-1}}),\\
&\beta_{n}^{(k-1)}(a,q)=\sum_{n\geq n_{k-1}\geq\cdots\geq n_1\geq -\infty}\prod_{i=1}^{k-1}
\frac{(-a^{2^{k-i-1}};q^{2^{k-i-1}})_{2n_i}}
{(q^{2^{k-i}};q^{2^{k-i}})_{n_{i+1}-n_i}}q^{2^{k-i-1}n_i}
\beta_{n_1}(a^{2^{k-1}},q^{2^{k-1}}),
\end{align*}
where $n_{k}=n$. Next, by making the parameter replacement $ n_i\mapsto n_{k-i+1}$ for $2\leq i \leq k-1$, and then taking $i\mapsto k-1-i$ in $\beta_{n}^{(k-1)}(a,q)$, and putting the resulting bilateral Bailey pair into Lemma \ref{bbpinfinite}.
\end{proof}

From the above theorem, we can derive some identities on Appell--Lerch series in the form of multisums. Now, we take $a=1, k=2$ in Theorem \ref{itelattice1} as an illustration. By letting $x\rightarrow\infty, y\rightarrow\infty$; $x\rightarrow\infty, y\mapsto -\sqrt{q}$; $x\mapsto \sqrt{q}, y\mapsto -\sqrt{q}$, respectively, and inserting into the main bilateral Bailey pair \eqref{originalbbp}, we obtain the following identities.

\begin{corollary}
We have
\begin{align}
&\sum_{n_1\geq n_2\geq 0}\frac{(z,q^2/z;q^2)_{n_2}q^{n_1^2+n_2}}
{(1+q^{2n_2})(q^2; q^2)_{n_1-n_2}(q)_{2n_2}}
=\frac{1}{(q)_\infty}\sum_{n=-\infty}^{\infty}
\frac{(-1)^nz^nq^{2n^2}}{1+q^{2n}},\label{Appell1}\\
&\sum_{n_1\geq n_2\geq 0}\frac{(-q^{\frac{1}{2}})_{n_1}(z,q^2/z;q^2)_{n_2}
q^{\frac{1}{2}n_1^2+n_2}}{(1+q^{2n_2})(q^2; q^2)_{n_1-n_2}(q)_{2n_2}}
=\frac{(-q^{\frac{1}{2}})_\infty}
{(q)_\infty}\sum_{n=-\infty}^{\infty}\frac{(-1)^nz^nq^{\frac{3}{2}n^2}}{1+q^{2n}},\\
&2\sum_{n_1\geq n_2\geq 0}\frac{(-1)^{n_1}(q;q^2)_{n_1}(z,q^2/z;q^2)_{n_2}q^{n_2}}
{(1+q^{2n_2})(q^2; q^2)_{n_1-n_2}(q)_{2n_2}}
=\frac{(q;q^2)_\infty}{(q^2;q^2)_\infty}\sum_{n=-\infty}^{\infty}\frac{z^nq^{n^2}}{1+q^{2n}}.
\end{align}
\end{corollary}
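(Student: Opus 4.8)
The plan is to obtain all three identities as specializations of Theorem~\ref{itelattice1} at $a=1$ and $k=2$, feeding in the main bilateral Bailey pair \eqref{originalbbp} with $q$ replaced by $q^2$. The three identities will correspond to exactly the three classical limits $x,y\to\infty$; $x\to\infty,\,y\mapsto-\sqrt q$; $x\mapsto\sqrt q,\,y\mapsto-\sqrt q$ that already appear in the proof of Lemma~\ref{weakbbailey}, so most of the asymptotic work can be borrowed from there.

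First I would write out the $k=2$ case. Here the product $\prod_{i=0}^{k-2}$ collapses to its single $i=0$ factor, so with $a=1$ (using $\frac{1+a}{1+aq^{2n}}=\frac{2}{1+q^{2n}}$ and $(-a;q)_{2n_2}=(-1;q)_{2n_2}$) Theorem~\ref{itelattice1} reads
\begin{align*}
&\sum_{n_1\geq n_2\geq -\infty}(x,y)_{n_1}\Big(\frac{q}{xy}\Big)^{n_1}
\frac{(-1;q)_{2n_2}}{(q^2;q^2)_{n_1-n_2}}q^{n_2}\,\beta_{n_2}(1,q^2)\\
&\qquad=\frac{(q/x,q/y)_\infty}{(q,q/xy)_\infty}\sum_{n=-\infty}^{\infty}
\frac{(x,y)_n}{(q/x,q/y)_n}\Big(\frac{q}{xy}\Big)^n\frac{2}{1+q^{2n}}q^n\,\alpha_n(1,q^2).
\end{align*}
Substituting $q\mapsto q^2$ in \eqref{originalbbp} gives $\alpha_n(1,q^2)=(-1)^nz^nq^{n^2-n}$ and $\beta_n(1,q^2)=\frac{(z,q^2/z;q^2)_n}{(q^2;q^2)_{2n}}$; since this is still a bilateral Bailey pair relative to $a=1$, relation \eqref{basicbbf} forces $\beta_{n_2}(1,q^2)=0$ for $n_2<0$, which truncates the outer sum to $n_2\geq 0$ and produces the range $n_1\geq n_2\geq 0$.

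The one genuine simplification I would carry out is on the factor $\dfrac{(-1;q)_{2n_2}}{(q^2;q^2)_{2n_2}}$ arising from the $(-1;q)_{2n_2}$ weight together with the denominator of $\beta_{n_2}(1,q^2)$. Writing $(q^2;q^2)_{2n_2}=(q)_{2n_2}(-q)_{2n_2}$ and telescoping $\dfrac{(-1;q)_{2n_2}}{(-q)_{2n_2}}=\dfrac{1+1}{1+q^{2n_2}}=\dfrac{2}{1+q^{2n_2}}$, this factor becomes $\dfrac{2}{(1+q^{2n_2})(q)_{2n_2}}$, which is exactly the denominator $(1+q^{2n_2})(q)_{2n_2}$ appearing in each left-hand side, at the cost of an overall factor $2$.

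It then remains to pass to the three limits, which are precisely the specializations producing \eqref{bbaileyiden1}, \eqref{bbaileyiden4} and \eqref{bbaileyiden5} in the proof of Lemma~\ref{weakbbailey}. For $x,y\to\infty$ the surviving $\beta$-side weight is $q^{n_1^2}$ and the prefactor is $1/(q)_\infty$; for $x\to\infty,\,y\mapsto-\sqrt q$ they become $q^{n_1^2/2}(-q^{1/2})_{n_1}$ and $(-q^{1/2})_\infty/(q)_\infty$; and for $x\mapsto\sqrt q,\,y\mapsto-\sqrt q$ the weight is $(-1)^{n_1}(q;q^2)_{n_1}$ while the prefactor is $\frac{(q;q^2)_\infty}{(q,-1)_\infty}=\frac{(q;q^2)_\infty}{2(q^2;q^2)_\infty}$. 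On the $\alpha$-side, combining the corresponding limit of $\frac{(x,y)_n}{(q/x,q/y)_n}(q/xy)^n$ with $\frac{2}{1+q^{2n}}q^n$ and with $\alpha_n(1,q^2)=(-1)^nz^nq^{n^2-n}$ collapses the total power of $q$ to $q^{2n^2}$, $q^{3n^2/2}$ and $q^{n^2}$ respectively, giving the three right-hand sides. The point requiring the most care is the bookkeeping of the factors of $2$: in the first two cases the factor $2$ from the $(-1;q)_{2n_2}$ simplification and the factor $2$ pulled out of $\frac{2}{1+q^{2n}}$ cancel against each other, so no explicit $2$ survives, whereas in the third case the hidden $\frac12$ in $(q,-1)_\infty=2(q^2;q^2)_\infty$ absorbs one of them, leaving the explicit factor $2$ on the left-hand side of the third identity.
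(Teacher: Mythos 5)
Your proposal is correct and follows exactly the paper's route: specializing Theorem~\ref{itelattice1} at $a=1$, $k=2$ with the pair \eqref{originalbbp} under $q\mapsto q^2$, and then taking the limits $x,y\to\infty$; $x\to\infty,\,y\mapsto-\sqrt q$; $x\mapsto\sqrt q,\,y\mapsto-\sqrt q$. In fact you supply more detail than the paper (which states this derivation in a single sentence), and your bookkeeping of the factors of $2$ — including the cancellation in the first two cases and the surviving explicit $2$ in the third via $(q,-1)_\infty=2(q^2;q^2)_\infty$ — is accurate.
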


By comparing \eqref{weak2} with \eqref{Appell1}, it follows that
\begin{align*}
\sum_{n_1\geq n_2\geq 0}\frac{(z,q^2/z;q^2)_{n_2}q^{n_1^2+n_2}}
{(1+q^{2n_2})(q^2; q^2)_{n_1-n_2}(q)_{2n_2}}=\frac{(-q;q^2)_\infty}{2}\sum_{n=0}^\infty\frac{(-1, z, q^2/z;q^2)_n}{(q^2;q^2)_{2n}}q^{n(n+1)},
\end{align*}
which also can be proved by using Euler's $q$-exponential formula \cite[(II.2)]{GR04}. 

Similarly with the procedures as given in the proof of Theorem \ref{itelattice1} and by using the bilateral Bailey lattice as given in Lemma \ref{wnewbbf2}, we obtain the following result.

\begin{theorem}\label{itelattice2}
For a bilateral Bailey pair $(\alpha_n(a,q),\ \beta_n(a,q))$ and $k\geq 2$, we have
\begin{align}
&\sum_{n_1\geq\cdots\geq n_k\geq -\infty}(x,y)_{n_1}\Big(\frac{aq}{xy}\Big)^{n_1}\prod_{i=0}^{k-2}
\frac{\big(-(aq)^{2^i};q^{2^i}\big)_{2n_{i+2}}}
{(q^{2^{i+1}};q^{2^{i+1}})_{n_{i+1}-n_{i+2}}}q^{2^i(n_{i+1}-n_{i+2})}
\beta_{n_k}(a^{2^{k-1}},q^{2^{k-1}})\nonumber\\
&\qquad=\frac{(aq/x,aq/y)_{\infty}}{(aq,aq/xy)_{\infty}}\sum_{n=-\infty}^{\infty} \frac{(x,y)_n}{(aq/x,aq/y)_{n}} \Big(\frac{aq}{xy}\Big)^{n}
\alpha_{n}(a^{2^{k-1}},q^{2^{k-1}}).
\end{align}
\end{theorem}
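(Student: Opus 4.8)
The plan is to follow the same scheme as in the proof of Theorem~\ref{itelattice1}, but with the bilateral Bailey lattice of Lemma~\ref{wnewbbf1} replaced by that of Lemma~\ref{wnewbbf2}, and to conclude by feeding the iterated pair into BMS' bilateral Bailey lemma (Lemma~\ref{bbpinfinite}). Starting from a bilateral Bailey pair $(\alpha_n(a,q),\beta_n(a,q))$, one application of Lemma~\ref{wnewbbf2} gives $\alpha^{(1)}_n(a,q)=\alpha_n(a^2,q^2)$ and $\beta^{(1)}_n(a,q)=\sum_{j\le n}\frac{(-aq)_{2j}\,q^{n-j}}{(q^2;q^2)_{n-j}}\beta_j(a^2,q^2)$. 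The decisive feature is that each application doubles both the parameter and the base inside the arguments of $\alpha$ and $\beta$ (sending $a\mapsto a^2$ and $q\mapsto q^2$), so that after $k-1$ applications the innermost pair is evaluated at $(a^{2^{k-1}},q^{2^{k-1}})$.

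I would then prove by induction on $k$ that the $(k-1)$-fold iterate is
\begin{align*}
\alpha^{(k-1)}_n(a,q)&=\alpha_n(a^{2^{k-1}},q^{2^{k-1}}),\\
\beta^{(k-1)}_n(a,q)&=\sum_{n\ge n_{k-1}\ge\cdots\ge n_1\ge-\infty}\ \prod_{i=1}^{k-1}\frac{\big(-(aq)^{2^{k-i-1}};q^{2^{k-i-1}}\big)_{2n_i}\,q^{2^{k-i-1}(n_{i+1}-n_i)}}{(q^{2^{k-i}};q^{2^{k-i}})_{n_{i+1}-n_i}}\,\beta_{n_1}(a^{2^{k-1}},q^{2^{k-1}}),
\end{align*}
with $n_k=n$. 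The base case $k=2$ is precisely one application of Lemma~\ref{wnewbbf2}. For the inductive step I substitute $a\mapsto a^2,\ q\mapsto q^2$ in the $(k-2)$-fold formula---which, after simplifying $(a^2q^2)^{2^{m}}=(aq)^{2^{m+1}}$ and $(q^2)^{2^{m}}=q^{2^{m+1}}$, reproduces exactly the factors $i=1,\dots,k-2$---and then a further application of Lemma~\ref{wnewbbf2} supplies the outermost $i=k-1$ factor, whose base is $q$ and whose Pochhammer argument is $-aq$. In contrast to the Lemma~\ref{wnewbbf1}-iteration of Theorem~\ref{itelattice1}, the $\alpha$-side accumulates \emph{no} prefactor here, since $\alpha'_n(a,q)=\alpha_n(a^2,q^2)$ carries no multiplier; this is exactly why the right-hand side of the statement contains no product over $i$.

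To reach the form in the statement I would reindex by $n_i\mapsto n_{k-i+1}$ for $2\le i\le k-1$ and then set $i\mapsto k-1-i$, which turns $\prod_{i=1}^{k-1}$ into $\prod_{i=0}^{k-2}$ with Pochhammer argument $-(aq)^{2^i}$, base $q^{2^i}$, denominator base $q^{2^{i+1}}$, and weight $q^{2^i(n_{i+1}-n_{i+2})}$, while reversing the nested sum to $n_1\ge\cdots\ge n_k\ge-\infty$. Inserting this bilateral Bailey pair into Lemma~\ref{bbpinfinite}, whose left side is $\sum_{n}(x,y)_n(aq/xy)^n\beta^{(k-1)}_n(a,q)$ and whose right side carries the factor $(aq/x,aq/y)_\infty/(aq,aq/xy)_\infty$ together with $\alpha^{(k-1)}_n(a,q)=\alpha_n(a^{2^{k-1}},q^{2^{k-1}})$, yields the asserted identity.

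The step I expect to be the main obstacle is pinning down the closed form of $\beta^{(k-1)}$: one must keep the doubling exponents $2^{k-i-1}$ on the $q$-weights aligned with the matching Pochhammer bases $q^{2^{k-i-1}}$ and denominator bases $q^{2^{k-i}}$ throughout the induction, and since every iteration rescales $q$, a single off-by-one error in a power of $2$ at one level propagates through all subsequent levels. Once that bookkeeping is verified, the rest is the routine substitution of the iterated pair into the already-established Lemma~\ref{bbpinfinite}.
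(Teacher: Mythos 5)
Your proposal is correct and follows essentially the same route as the paper: iterate the bilateral Bailey lattice of Lemma~\ref{wnewbbf2} $k-1$ times (the paper simply invokes the procedure of Theorem~\ref{itelattice1} rather than spelling out the induction), reverse the indices via $n_i\mapsto n_{k-i+1}$ and $i\mapsto k-1-i$, and insert the resulting pair into Lemma~\ref{bbpinfinite}. Your closed form for $\beta^{(k-1)}_n$ and the observation that the $\alpha$-side accumulates no prefactor match the paper's computation exactly.
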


Letting $a=1, k=2$ and taking $x\rightarrow\infty, y\rightarrow\infty$; $x\rightarrow\infty, y\mapsto -\sqrt{q}$; $x\mapsto \sqrt{q}, y\mapsto -\sqrt{q}$; $x\rightarrow\infty, y\mapsto -q$; $x\rightarrow\infty, y\mapsto -1$, respectively,  we deduce the following double sum identities of Rogers--Ramanujan type.

\begin{corollary}
We have
\begin{align}
&\sum_{n_1\geq n_2\geq 0}\frac{(z,q^2/z;q^2)_{n_2}}{(q^2; q^2)_{n_1-n_2}(q)_{2n_2}}q^{n_1^2+n_1-n_2}
=\frac{(zq,q^3/z,q^4;q^4)_\infty}{(q)_\infty},\label{new1}\\
&\sum_{n_1\geq n_2\geq 0}\frac{(-q^{\frac{1}{2}})_{n_1}(z,q^2/z;q^2)_{n_2}}{(q^2; q^2)_{n_1-n_2}(q)_{2n_2}}q^{\frac{1}{2}n_1^2+n_1-n_2}
=\frac{(-q^{\frac{1}{2}})_\infty(zq^{\frac{1}{2}},
q^{\frac{5}{2}}/z,q^3;q^3)_\infty}{(q)_\infty},\label{new2}\\
&2\sum_{n_1\geq n_2\geq 0}\frac{(-1)^{n_1}(q;q^2)_{n_1}(z,q^2/z;q^2)_{n_2}}{(q^2; q^2)_{n_1-n_2}(q)_{2n_2}}q^{n_1-n_2}
=(-z,-q^2/z,q;q^2)_\infty, \label{new3}\\
&\sum_{n_1\geq n_2\geq 0}\frac{(-q)_{n_1}(z,q^2/z;q^2)_{n_2}}{(q^2; q^2)_{n_1-n_2}(q)_{2n_2}}q^{\binom{n_1+1}{2}-n_2} \label{new4}\\
&\qquad\qquad\qquad\qquad=\frac{(-q)_\infty}{(q)_\infty}\big((z,q^3/z,q^3;q^3)_\infty+(zq, q^2/z,q^3;q^3)_\infty \big),\nonumber\\
&\sum_{n_1\geq n_2\geq 0}\frac{(-1)_{n_1}(z,q^2/z;q^2)_{n_2}}{(q^2; q^2)_{n_1-n_2}(q)_{2n_2}}q^{\binom{n_1+1}{2}+n_1-n_2} =\frac{2(-q)_\infty}{(q)_\infty}\sum_{n=-\infty}^\infty
\frac{(-1)^nz^nq^{n(3n-1)/2}}{1+q^n}.\label{new5}
\end{align}
\end{corollary}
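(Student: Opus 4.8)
The plan is to obtain the five identities as the $a=1$, $k=2$ specialization of Theorem \ref{itelattice2}, fed by the key bilateral Bailey pair \eqref{originalbbp}. First I would set $a=1$ and $k=2$ in Theorem \ref{itelattice2}; then $a^{2^{k-1}}=1$, $q^{2^{k-1}}=q^2$, and the product over $i$ collapses to its single $i=0$ factor $\frac{(-q)_{2n_2}}{(q^2;q^2)_{n_1-n_2}}q^{n_1-n_2}$. Into the resulting identity I would insert the pair \eqref{originalbbp} with $q$ replaced by $q^2$, namely $\alpha_n(1,q^2)=(-1)^nz^nq^{n(n-1)}$ and $\beta_n(1,q^2)=(z,q^2/z;q^2)_n/(q^2;q^2)_{2n}$. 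The crucial simplification on the left is $\frac{(-q)_{2n_2}}{(q^2;q^2)_{2n_2}}=\frac{1}{(q)_{2n_2}}$, which follows from $(q^2;q^2)_m=(q)_m(-q)_m$; after it the left-hand side becomes $\sum_{n_1\ge n_2\ge 0}(x,y)_{n_1}(q/xy)^{n_1}\frac{q^{n_1-n_2}}{(q^2;q^2)_{n_1-n_2}}\frac{(z,q^2/z;q^2)_{n_2}}{(q)_{2n_2}}$, while the right-hand side has exactly the shape of BMS' lemma (Lemma \ref{bbpinfinite}) with $a=1$, but with $\alpha_n(1,q)$ replaced by $\alpha_n(1,q^2)$.

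Next I would apply the five parameter specializations $x\to\infty, y\to\infty$; $x\to\infty, y\mapsto-\sqrt q$; $x\mapsto\sqrt q, y\mapsto-\sqrt q$; $x\to\infty, y\mapsto-q$; $x\to\infty, y\mapsto-1$, exactly as in the passage from Lemma \ref{bbpinfinite} to Lemma \ref{weakbbailey}. Under these limits the factor $(x,y)_{n_1}(q/xy)^{n_1}$ tends respectively to $q^{n_1^2}$, $q^{n_1^2/2}(-q^{1/2})_{n_1}$, $2(-1)^{n_1}(q;q^2)_{n_1}$, $q^{\binom{n_1}{2}}(-q)_{n_1}$ and $q^{\binom{n_1+1}{2}}(-1)_{n_1}$, while the prefactor $\frac{(q/x,q/y)_\infty}{(q,q/xy)_\infty}$ produces $1/(q)_\infty$, $(-q^{1/2})_\infty/(q)_\infty$, $(q;q^2)_\infty/(q^2;q^2)_\infty$, $(-q)_\infty/(q)_\infty$ and $2(-q)_\infty/(q)_\infty$. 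Combining the first factor with the surviving $q^{n_1-n_2}$ yields the $q$-exponents $n_1^2+n_1-n_2$, $\tfrac12 n_1^2+n_1-n_2$, $n_1-n_2$, $\binom{n_1+1}{2}-n_2$ and $\binom{n_1+1}{2}+n_1-n_2$ that appear on the left of \eqref{new1}--\eqref{new5}, which identifies the five left-hand sides.

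It then remains to evaluate the right-hand theta series $\sum_{n=-\infty}^\infty(\cdots)\alpha_n(1,q^2)$ using $\alpha_n(1,q^2)=(-1)^nz^nq^{n(n-1)}$. For \eqref{new1} the sum is $\sum_n(-1)^n(zq)^nq^{4\binom{n}{2}}=(zq,q^3/z,q^4;q^4)_\infty$ by Jacobi's triple product \eqref{JTP} in base $q^4$ after the shift $z\mapsto zq$; for \eqref{new2} the same identity in base $q^3$ with $z\mapsto zq^{1/2}$ gives $(zq^{1/2},q^{5/2}/z,q^3;q^3)_\infty$; for \eqref{new3} one writes $z^n=(-1)^n(-z)^n$ and applies \eqref{JTP} in base $q^2$, then uses $(q;q^2)_\infty(q^2;q^2)_\infty=(q)_\infty$ to reach $(-z,-q^2/z,q;q^2)_\infty$; and for \eqref{new4} one splits $1+q^n$ and applies \eqref{JTP} twice in base $q^3$ to obtain the two-term product. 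In \eqref{new5} the surviving factor $1/(1+q^n)$ blocks any such collapse, so the right side is left as the Appell--Lerch series $\frac{2(-q)_\infty}{(q)_\infty}\sum_n\frac{(-1)^nz^nq^{n(3n-1)/2}}{1+q^n}$.

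The main obstacle is organizational rather than conceptual: one must check that the five limiting substitutions may be taken termwise inside the double sum (the requisite absolute convergence is already part of Theorem \ref{itelattice2}), and one must correctly track the base change and the $z$-shift in each Jacobi triple product evaluation. Once the $q$-exponents are matched and the correct base is identified in each case, every product identity follows immediately, and the lone non-collapsing case \eqref{new5} is simply recorded in its Appell--Lerch form.
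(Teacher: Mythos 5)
Your proposal is correct and follows exactly the paper's route: the paper likewise obtains \eqref{new1}--\eqref{new5} by setting $a=1$, $k=2$ in Theorem \ref{itelattice2}, inserting the key bilateral Bailey pair \eqref{originalbbp} in base $q^2$ (so that $(-q)_{2n_2}/(q^2;q^2)_{2n_2}=1/(q)_{2n_2}$), and applying the same five specializations of $(x,y)$ used to pass from Lemma \ref{bbpinfinite} to Lemma \ref{weakbbailey}, with Jacobi's triple product \eqref{JTP} evaluating the resulting theta sums in bases $q^4$, $q^3$, $q^2$, $q^3$, and leaving \eqref{new5} as an Appell--Lerch series. Your bookkeeping of the limits, exponents, and base/argument shifts in each triple-product evaluation is accurate, so the write-up fills in precisely the details the paper leaves implicit.
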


Note that identity \eqref{new1} can be gotten by using  Euler's $q$-exponential formula \cite[(II.2)]{GR04} and the $q$-analogue of Bailey's $_2 F_1 (-1)$ summation formula  \cite[(II.2,10)]{GR04}. Identity \eqref{new3} also can be derived by applying the $q$-binomial theorem   and the $q$-Gauss sum as given in \cite[(II.3), (II.8)]{GR04}. Moreover, the above identities contain many known identities as special cases.  For example, when $z=1$ and  $q\mapsto q^2$, \eqref{new2} reduces to Entry 4.2.11 in \cite{AB09}, which is
\begin{align*}
\sum_{n=0}^{\infty}\frac{(-q;q^2)_n}{(q^4;q^4)_n}q^{n^2+2n}
=\frac{(-q;q^2)_\infty(q,q^5,q^6;q^6)_\infty}{(q^2;q^2)_\infty}.
\end{align*}

\section{Andrews and Warnaar's bilateral Bailey lemmas}

Recall that a series is of Hecke--type if it has the  form
\[
 \sum\limits_{(n,j)\in D} (-1)^{H(n,j)} q^{Q(n,j)+L(n,j)},
 \]
where $H$ and $L$ are linear forms, $Q$ is a quadratic form, and $D$ is some subset of $\mathbb{Z}\times \mathbb{Z}$ such that $Q(n,j)\geq 0$ for any $(n,j)\in D$. Hecke--type series have received extensive attention since the study of Jacobi and Hecke, see \cite{And841, HM14}. In this section, we fit the bilateral Bailey pair \eqref{originalbbp} into the bilateral Bailey lemmas given by Andrews and Warnaar \cite{AW02}, which will imply identities on Hecke--type series.

Andrews and Warnaar \cite{AW02} gave the following forms of  the bilateral Bailey lemmas for specific bilateral Bailey pairs $(\alpha_n, \beta_n)$ related to $a=1$ and $q \mapsto q^2$, which satisfy that
\begin{align*}
\beta_n=\sum_{r=-n}^n\frac{\alpha_r}{(q^2; q^2)_{n-r}(q^2; q^2)_{n+r}}.
\end{align*}

\begin{lemma}[Andrews and Warnaar's bilateral Bailey lemma I]\label{thm5}
If  $(\alpha_n, \beta_n)$ is a bilateral Bailey pair  related to $a=1$ and $q \mapsto q^2$, then
\begin{align*}
\sum_{n=0}^\infty\frac{(q^2; q^2)_{2n}q^n}{(-q)_{2n+1}}\beta_n=\sum_{n=0}^\infty q^{n(n+1)}\sum_{j=-n}^nq^{-j^2}\alpha_j.
\end{align*}
\end{lemma}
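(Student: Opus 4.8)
The plan is to run the usual Bailey-lemma mechanism in its bilateral form: insert the defining relation of $\beta_n$, interchange the order of summation, and reduce the whole statement to a single ``kernel'' identity that no longer mentions the particular pair. First I would write
\begin{align*}
\sum_{n=0}^\infty\frac{(q^2; q^2)_{2n}q^n}{(-q)_{2n+1}}\beta_n
=\sum_{n=0}^\infty\frac{(q^2; q^2)_{2n}q^n}{(-q)_{2n+1}}\sum_{r=-n}^n\frac{\alpha_r}{(q^2;q^2)_{n-r}(q^2;q^2)_{n+r}},
\end{align*}
and interchange the $n$- and $r$-sums; this is legitimate because every exponent of $q$ that occurs is bounded below, so the double series is a well-defined formal power series. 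On the right-hand side of the asserted identity I would likewise swap the $n$- and $j$-sums, using that $-n\le j\le n$ is the same as $n\ge|j|$. It is also convenient to record, from $1-q^{2k}=(1-q^k)(1+q^k)$, the simplification $\frac{(q^2;q^2)_{2n}}{(-q)_{2n+1}}=\frac{(q)_{2n}}{1+q^{2n+1}}$, which will streamline the endgame.

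After both interchanges, equating the coefficient of each $\alpha_r$ reduces everything to the pair-independent kernel identity
\begin{align*}
\sum_{n=|r|}^{\infty}\frac{(q^2;q^2)_{2n}\,q^n}{(-q)_{2n+1}(q^2;q^2)_{n-r}(q^2;q^2)_{n+r}}
=q^{-r^2}\sum_{n=|r|}^{\infty}q^{n(n+1)},\qquad r\in\mathbb Z,
\end{align*}
which is exactly the statement that the sequences $\big(\tfrac{(q^2;q^2)_{2n}q^n}{(-q)_{2n+1}},\,q^{-r^2}\sum_{n\ge|r|}q^{n(n+1)}\big)$ form a conjugate bilateral Bailey pair; once it is in hand, the lemma follows at once. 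Both sides are invariant under $r\mapsto-r$, so I may take $r\ge0$; setting $n=r+m$ and cancelling $q^{r}$ turns the kernel identity into the single partial-theta evaluation
\begin{align*}
F_r:=\sum_{m=0}^{\infty}\frac{(q^2;q^2)_{2r+2m}\,q^{m}}{(-q)_{2r+2m+1}(q^2;q^2)_{m}(q^2;q^2)_{2r+m}}
=\sum_{m=0}^{\infty}q^{m^2+(2r+1)m}=:G_r.
\end{align*}

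To prove $F_r=G_r$ I would argue by a recurrence in $r$. Splitting off the $m=0$ term of $G_r$ and shifting $m\mapsto m+1$ in the remainder gives the clean recurrence $G_r=1+q^{2r+2}G_{r+1}$, whose iteration reproduces $\sum_m q^{m^2+(2r+1)m}$. The goal is then to show that $F_r$ obeys the same recurrence $F_r=1+q^{2r+2}F_{r+1}$ and the correct limit. For the boundary, the $m=0$ term of $F_r$ equals $1/(-q)_{2r+1}$, and letting $r\to\infty$ term-by-term and resumming by Euler's identity $\sum_{m\ge0}q^m/(q^2;q^2)_m=1/(q;q^2)_\infty$ yields $F_r\to \tfrac{1}{(-q)_\infty}\cdot\tfrac{1}{(q;q^2)_\infty}=1$, the last equality being Euler's $(-q)_\infty=1/(q;q^2)_\infty$; this matches $G_r\to1$. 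Writing $F_r-G_r=q^{2r+2}(F_{r+1}-G_{r+1})$ and iterating, the accumulating prefactor $q^{(2r+2)+(2r+4)+\cdots}$ forces $F_r-G_r=0$ as a formal power series.

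The main obstacle is establishing the recurrence $F_r=1+q^{2r+2}F_{r+1}$ for the left-hand sum, where the mixed bases $q$ and $q^2$ together with the $1+q^{2n+1}$ denominator make a naive term comparison awkward. I would settle it by $q$-creative telescoping (Zeilberger's $q$-algorithm): produce an explicit certificate $H(m,r)$ with $a_m(r)-q^{2r+2}a_m(r+1)-\delta_{m,0}=H(m+1,r)-H(m,r)$, where $a_m(r)$ is the summand of $F_r$ and $H(0,r)=0=\lim_{m\to\infty}H(m,r)$, and then sum over $m\ge0$. As a cross-check and an alternative route one can expand $1/(1+q^{2n+1})=\sum_{k\ge0}(-1)^kq^{(2n+1)k}$ and resum the resulting inner $q$-binomial sums; or one may simply invoke the partial-theta (false theta) evaluation that underlies Andrews and Warnaar's construction, in which form the lemma is stated in \cite{AW02}.
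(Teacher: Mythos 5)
First, a remark on the comparison: the paper contains no proof of Lemma \ref{thm5} at all --- it is quoted as a known result of Andrews and Warnaar \cite{AW02} --- so your proposal must be judged as a self-contained proof rather than against an in-paper argument. Its architecture is sound, and it is in fact the same Bailey-transform mechanism that underlies \cite{AW02}: inserting the defining relation for $\beta_n$, interchanging summations (legitimate here in the formal power series sense, as you note), and reducing to the pair-independent kernel identity is correct. The details you record along the way also check out: the simplification $(q^2;q^2)_{2n}/(-q)_{2n+1}=(q)_{2n}/(1+q^{2n+1})$, the symmetry under $r\mapsto -r$, the substitution $n=r+m$ turning the kernel identity into $F_r=G_r$, the recurrence $G_r=1+q^{2r+2}G_{r+1}$, and the concluding iteration $F_r-G_r=q^{2r+2}(F_{r+1}-G_{r+1})$, which, since all series involved have nonnegative $q$-valuation, does force $F_r=G_r$; your boundary computation $F_r\to 1$ via the two Euler identities is also correct, though in fact redundant once the recurrence is available.

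However, there is a genuine gap, and it sits exactly at the heart of the lemma: the recurrence $F_r=1+q^{2r+2}F_{r+1}$ is never proved. You yourself flag it as ``the main obstacle'' and propose to settle it by $q$-creative telescoping, but no certificate $H(m,r)$ is exhibited and no computation is carried out; announcing that Zeilberger's algorithm would produce one is a plan, not a proof. This step cannot be waved through, because the kernel identity $F_r=G_r$ --- equivalently, that $\bigl((q^2;q^2)_{2n}q^n/(-q)_{2n+1},\ q^{-r^2}\sum_{n\ge |r|}q^{n(n+1)}\bigr)$ forms a conjugate pair --- is the entire analytic content of the lemma: specializing the lemma to the unit pair $\alpha_j=\delta_{j,r}$ recovers the kernel identity, so the two statements are equivalent, and everything else in your write-up is formal bookkeeping. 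Your fallback suggestions do not close the gap either: ``expand $1/(1+q^{2n+1})$ and resum'' is left entirely unexecuted, and invoking the partial-theta evaluation ``in which form the lemma is stated in \cite{AW02}'' is circular, since that evaluation is precisely the result under proof. To complete the argument you must either exhibit the telescoping certificate explicitly, or prove the partial-theta evaluation independently, for instance by deriving it from Rogers' classical false theta identities, which is, in substance, what Andrews and Warnaar do.
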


\begin{lemma}[Andrews and Warnaar's bilateral Bailey lemma II]\label{thm10}
If  $(\alpha_n, \beta_n)$ is a bilateral Bailey pair  related to $a=1$ and $q \mapsto q^2$, then
\begin{align*}
\sum_{n=0}^\infty(q)_{2n}q^n\beta_n=\sum_{n=0}^\infty q^{\binom{n+1}{2}}\sum_{j=-\lfloor\frac{n}{2}\rfloor}^{\lfloor\frac{n}{2}\rfloor}q^{-2j^2}\alpha_j.
\end{align*}
\end{lemma}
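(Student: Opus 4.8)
The plan is to run the bilateral Bailey transform ``in reverse'': expand $\beta_n$ in terms of the $\alpha_r$ on the left, interchange the two summations, and identify the resulting inner sum as the $\gamma$-side of a conjugate bilateral Bailey pair that turns out to be a partial theta function. Concretely, I would first substitute the defining relation $\beta_n=\sum_{r=-n}^{n}\alpha_r\big/\big((q^2;q^2)_{n-r}(q^2;q^2)_{n+r}\big)$ into $\sum_{n\ge0}(q)_{2n}q^n\beta_n$. Under the absolute convergence that is built into the hypotheses of these bilateral lemmas (cf.\ the convergence proviso in Lemma \ref{thm:bilatbaileylemma}), swapping the order of summation gives
\[
\sum_{n=0}^{\infty}(q)_{2n}q^n\beta_n=\sum_{r=-\infty}^{\infty}\alpha_r\,S_r,\qquad S_r:=\sum_{n\ge|r|}\frac{(q)_{2n}\,q^n}{(q^2;q^2)_{n-r}(q^2;q^2)_{n+r}}.
\]
Since $S_r$ is visibly invariant under $r\mapsto-r$, it suffices to treat $r\ge0$. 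On the right-hand side I would reverse the analogous interchange: the condition $|j|\le\lfloor n/2\rfloor$ is equivalent to $n\ge2|j|$, so the coefficient of $\alpha_j$ in $\sum_{n\ge0}q^{\binom{n+1}{2}}\sum_{j=-\lfloor n/2\rfloor}^{\lfloor n/2\rfloor}q^{-2j^2}\alpha_j$ equals $q^{-2j^2}\sum_{n\ge2|j|}q^{\binom{n+1}{2}}$. Thus the whole theorem collapses to the single evaluation
\[
S_r=q^{-2r^2}\sum_{n\ge2|r|}q^{\binom{n+1}{2}}\qquad(r\in\mathbb Z),
\]
i.e.\ to exhibiting the conjugate pair $\big(q^{-2r^2}\sum_{n\ge2|r|}q^{\binom{n+1}{2}},\,(q)_{2n}q^n\big)$.

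To evaluate $S_r$ (say $r\ge0$), I would set $n=r+m$, use the even/odd splitting $(q)_{2N}=(q;q^2)_N(q^2;q^2)_N$ and the identity $(q)_{2r}/(q^2;q^2)_{2r}=1/(-q;q)_{2r}$, and simplify the shifted factorials. This recasts the inner sum as a basic hypergeometric series in base $q^2$,
\[
S_r=\frac{q^{r}}{(-q;q)_{2r}}\;{}_{2}\phi_{1}\big(q^{2r+1},q^{2r+2};q^{4r+2};q^{2},q\big).
\]
The decisive structural feature is that here the quantity $abz/c$ equals the base $q^2$, so this sits one Heine step away from a summable configuration, and (because its value is a partial, not a complete, theta series) it cannot evaluate to an infinite product. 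Applying Heine's second transformation sends it to a series whose first upper parameter is the base $q^2$ and hence cancels, leaving a Rogers--Fine-type sum $\sum_{m\ge0}\big(q^{2r+2};q^2\big)_m\big/\big(q^{2r+3};q^2\big)_m\,q^{2rm}$; feeding this into the Rogers--Fine identity produces precisely the quadratic (theta-like) exponents $q^{\binom{k+1}{2}+2rk}$. Matching constants against the base case $r=0$, which is Gauss's identity $\sum_{k\ge0}q^{\binom{k+1}{2}}=(q^2;q^2)_\infty/(q;q^2)_\infty$ (itself a specialization of \eqref{JTP}), would then yield $S_r=q^{-2r^2}\sum_{n\ge2r}q^{\binom{n+1}{2}}$.

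I expect the genuine obstacle to be exactly this inner-sum evaluation. The two interchanges of summation are routine once absolute convergence is granted, and the equivalence $|j|\le\lfloor n/2\rfloor\Leftrightarrow n\ge2|j|$ is elementary; all the content lies in proving that the ${}_2\phi_1$ above telescopes to a partial theta function. The main difficulty is that no single classical product-type summation ($q$-Gauss, $q$-Kummer) closes it, so one must route it through a Heine transformation into Rogers--Fine form; the step is also the most delicate in bookkeeping, since the normalization $q^{-2r^2}$ and the floor function $\lfloor n/2\rfloor$ on the $\alpha$-side have to be reproduced exactly by the exponent shift $n=2r+k$ (which rewrites $q^{-2r^2+\binom{2r+k+1}{2}}$ as $q^{r}\,q^{\binom{k+1}{2}+2rk}$). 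Verifying this exponent identity and the base case are what make the two sides agree term by term.
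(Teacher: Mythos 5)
The paper itself offers no proof of Lemma \ref{thm10}: it is quoted verbatim from Andrews and Warnaar \cite{AW02}, so your proposal is not competing with an internal argument but supplying one, and it does succeed. The reduction to the single evaluation
\[
S_r=\sum_{n\ge |r|}\frac{(q)_{2n}q^n}{(q^2;q^2)_{n-r}(q^2;q^2)_{n+r}}
=q^{-2r^2}\sum_{n\ge 2|r|}q^{\binom{n+1}{2}}
\]
is the correct conjugate-Bailey-pair formulation (both interchanges are legitimate under the absolute-convergence proviso, and $\lfloor n/2\rfloor\ge|j|\Leftrightarrow n\ge 2|j|$ is right). Your analytic chain also checks out in detail for $r\ge 1$: the identity $(q)_{2r}/(q^2;q^2)_{2r}=1/(-q;q)_{2r}$ gives $S_r=\frac{q^r}{(-q;q)_{2r}}\,{}_2\phi_1(q^{2r+1},q^{2r+2};q^{4r+2};q^2,q)$, where indeed $abz/c=q^2$; Heine's second transformation produces $\frac{(q^{2r},q^{2r+3};q^2)_\infty}{(q^{4r+2},q;q^2)_\infty}\sum_{m\ge0}\frac{(q^{2r+2};q^2)_m}{(q^{2r+3};q^2)_m}q^{2rm}$; and in the Rogers--Fine expansion of that sum (base $q^2$, $\alpha=q^{2r+2}$, $\beta=q^{2r+3}$, $\tau=q^{2r}$) the factorial ratio of each term collapses to $\frac{1-q^{2r+1}}{(1-q^{2r})(1-q^{2r+2n+1})}$, so that after writing $1-q^{4r+4n+2}=(1-q^{2r+2n+1})(1+q^{2r+2n+1})$ the output is $\frac{1-q^{2r+1}}{1-q^{2r}}\sum_{k\ge0}q^{\binom{k+1}{2}+2rk}$, the even and odd values of $k$ supplying the two exponent families $2n^2+(4r+1)n$ and $2n^2+(4r+3)n+2r+1$. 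All prefactors then cancel identically, leaving $S_r=q^r\sum_{k\ge0}q^{\binom{k+1}{2}+2rk}$, which your exponent identity converts to the stated form.

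Two repairs are needed, both at the finishing step. First, ``matching constants against the base case $r=0$'' is not a valid way to fix the normalization: the leftover constant is the $r$-dependent quantity $\frac{q^r}{(-q;q)_{2r}}\cdot\frac{(q^{2r};q^2)_{r+1}}{(q;q^2)_{r+1}}\cdot\frac{1-q^{2r+1}}{1-q^{2r}}$, and checking one value of $r$ cannot determine it; it must be simplified outright (it equals $q^r$, via $(-q;q)_{2r}\,(q;q^2)_r=(q^{2r+2};q^2)_r$). Second, $r=0$ is not merely a convenient anchor but a forced exception: there Heine's second transformation degenerates, since the new argument $c/b$ becomes $1$, the prefactor $(q^{2r};q^2)_\infty$ vanishes while the transformed series diverges, and Rogers--Fine likewise requires $|\tau|<1$. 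Your separate treatment of $r=0$ by the $q$-binomial theorem in base $q^2$ together with Gauss's identity is exactly what is needed there, so the argument is complete once these two roles are stated correctly.
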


Letting $q\mapsto q^2$ in the bilateral Bailey pair \eqref{originalbbp}, and then inserting it into Lemma \ref{thm5} and Lemma \ref{thm10}, respectively, we can get the following results on Hecke--type series.

\begin{theorem}\label{hecktype}
We have
\begin{align}
&\sum_{n=0}^{\infty}\frac{(z, q^2/z; q^2)_n}{(-q)_{2n+1}}q^{n}=\sum_{n=0}^{\infty}q^{n(n+1)}
\sum_{j=-n}^{n}(-1)^jz^jq^{-j},\label{hecktype1}\\
&\sum_{n=0}^{\infty}\frac{(z, q^2/z; q^2)_n}{(-q)_{2n}}q^{n}=\sum_{n=0}^{\infty}q^{\binom{n+1}{2}}\sum_{j=-\lfloor
\frac{n}{2}\rfloor}^{\lfloor\frac{n}{2}\rfloor}(-1)^jz^jq^{-j^2-j}.\label{hecktype2}
\end{align}
\end{theorem}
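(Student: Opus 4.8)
The plan is to obtain both identities by specializing the key bilateral Bailey pair \eqref{originalbbp} and feeding it into the two Andrews--Warnaar bilateral Bailey lemmas (Lemma \ref{thm5} and Lemma \ref{thm10}). Since those lemmas require a bilateral Bailey pair related to $a=1$ with base $q^2$, the first step is to replace $q$ by $q^2$ throughout Lemma \ref{basicbbp}. Using $2\binom{n}{2}=n^2-n$, this yields the pair
\[
\bigl(\alpha_n,\ \beta_n\bigr)=\Bigl((-1)^nz^nq^{n^2-n},\ \frac{(z,q^2/z;q^2)_n}{(q^2;q^2)_{2n}}\Bigr),
\]
which is a valid bilateral Bailey pair in base $q^2$ (the defining relation \eqref{basicbbf} is an identity in $q$, so the substitution $q\mapsto q^2$ preserves it), hence eligible for both lemmas.

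For \eqref{hecktype1} I would insert this pair into Lemma \ref{thm5}. On the left-hand side the factor $(q^2;q^2)_{2n}$ carried by the lemma cancels exactly against the denominator of $\beta_n$, leaving $\sum_{n\ge0}\frac{(z,q^2/z;q^2)_n}{(-q)_{2n+1}}q^n$, which is already the stated left side. On the right-hand side the inner summand simplifies to $q^{-j^2}\alpha_j=(-1)^jz^jq^{-j^2+(j^2-j)}=(-1)^jz^jq^{-j}$, producing precisely the double sum in \eqref{hecktype1}. Thus \eqref{hecktype1} follows from a direct substitution, with nothing more than the combination of the exponents of $q$.

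For \eqref{hecktype2} I would insert the same pair into Lemma \ref{thm10}. The only point that needs genuine care is the left-hand side, where the lemma carries the \emph{base-$q$} factor $(q)_{2n}$ against the \emph{base-$q^2$} denominator $(q^2;q^2)_{2n}$ of $\beta_n$. Here I would use the elementary factorization $1-q^{2j}=(1-q^j)(1+q^j)$ to obtain
\[
\frac{(q)_{2n}}{(q^2;q^2)_{2n}}=\frac{1}{(-q)_{2n}},
\]
which collapses the left side to $\sum_{n\ge0}\frac{(z,q^2/z;q^2)_n}{(-q)_{2n}}q^n$. On the right-hand side the inner summand becomes $q^{-2j^2}\alpha_j=(-1)^jz^jq^{-2j^2+(j^2-j)}=(-1)^jz^jq^{-j^2-j}$, matching \eqref{hecktype2} exactly. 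The main obstacle is therefore precisely this mixed-base Pochhammer simplification; once it is in hand, both identities are immediate consequences of the Andrews--Warnaar lemmas, and the remaining work is routine exponent bookkeeping.
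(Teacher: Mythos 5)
Your proposal is correct and follows exactly the paper's own route: substituting $q\mapsto q^2$ in the key bilateral Bailey pair \eqref{originalbbp} and inserting it into Lemma \ref{thm5} and Lemma \ref{thm10}, respectively. The details you supply (the cancellation of $(q^2;q^2)_{2n}$, the mixed-base simplification $(q)_{2n}/(q^2;q^2)_{2n}=1/(-q)_{2n}$, and the exponent bookkeeping $q^{-j^2}\alpha_j$ and $q^{-2j^2}\alpha_j$) are precisely the steps the paper leaves implicit in its one-line proof.
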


In \eqref{hecktype1}, if we take $z\mapsto-z^{-1}q$, it turns to be identity $(1.4)$ in \cite{AW02}. Moreover, if we let $z\mapsto q$, it leads to \cite[(1.1b)]{AW02}; if we take $z\mapsto -q$, it reduces to the last identity in \cite[P. 181]{AW02}, which is
\begin{align*}
\sum_{n=0}^\infty\frac{(-q;q^2)_n^2}{(-q)_{2n+1}}q^n=\sum_{n=0}^\infty(2n+1)q^{n(n+1)}.
\end{align*}
In \eqref{hecktype2}, by letting $z\mapsto\pm q$, we get
\begin{align*}
\sum_{n=0}^\infty\frac{(\pm q; q^2)_n^2}{(-q)_{2n}}q^n=\sum_{n=0}^\infty q^{\binom{n+1}{2}}\sum_{j=-\lfloor\frac{n}{2}\rfloor}^
{\lfloor\frac{n}{2}\rfloor}(\mp1)^jq^{-j^2}.
\end{align*}

Taking $q\mapsto q^2$ in the bilateral Bailey pair \eqref{newbbaileypair1}, and then inserting it into Lemma \ref{thm5} and Lemma \ref{thm10}, respectively, we obtain the following results.

\begin{theorem}\label{hecktypeite1}
We have
\begin{align}
&\sum_{n=0}^{\infty}\sum_{j=0}^n\frac{(q)_{2n}(z, q^4/z; q^4)_jq^{n+2j}}{(1+q^{2n+1})(1+q^{4j})(q^4;q^4)_{n-j}(q^2;q^2)_{2j}}
=\sum_{n=0}^{\infty}q^{n(n+1)}\sum_{j=-n}^{n}\frac{(-1)^jz^jq^{j^2}}{1+q^{4j}},\label{hecktypeite11}\\
&\sum_{n=0}^{\infty}\sum_{j=0}^n\frac{(q)_{2n}(z, q^4/z; q^4)_jq^{n+2j}}{(1+q^{4j})(q^4;q^4)_{n-j}(q^2;q^2)_{2j}}=\sum_{n=0}^{\infty}q^{\binom{n+1}{2}}\sum_{j=-\lfloor
\frac{n}{2}\rfloor}^{\lfloor\frac{n}{2}\rfloor}\frac{(-1)^jz^j}{1+q^{4j}}.\label{hecktypeite12}
\end{align}
\end{theorem}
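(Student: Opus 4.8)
The plan is to carry out exactly the two-step recipe stated just before the theorem: first replace $q$ by $q^2$ in the bilateral Bailey pair \eqref{newbbaileypair1} so that it becomes a pair related to $a=1$ with base $q^2$ (precisely the form required by Lemma \ref{thm5} and Lemma \ref{thm10}), and then feed it into each of the two Andrews and Warnaar lemmas. After the substitution $q\mapsto q^2$ the pair reads
\[
\alpha_n=\frac{2(-1)^nz^nq^{2n^2}}{1+q^{4n}},\qquad
\beta_n=\sum_{j=0}^n\frac{(-1;q^2)_{2j}\,(z,q^4/z;q^4)_j}{(q^4;q^4)_{n-j}(q^4;q^4)_{2j}}\,q^{2j},
\]
and everything that remains is the simplification of the two resulting sums.

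First I would record two elementary reductions of $q$-shifted factorials. Writing $(q^2;q^2)_m=(q)_m(-q)_m$ and cancelling against $(-q)_{2n+1}=(-q)_{2n}(1+q^{2n+1})$ gives
\[
\frac{(q^2;q^2)_{2n}}{(-q)_{2n+1}}=\frac{(q)_{2n}}{1+q^{2n+1}}.
\]
Likewise, peeling off the $i=0$ factor of $(-1;q^2)_{2j}=\prod_{i=0}^{2j-1}(1+q^{2i})$ and using $(q^4;q^4)_{2j}=(q^2;q^2)_{2j}(-q^2;q^2)_{2j}$ yields
\[
\frac{(-1;q^2)_{2j}}{(q^4;q^4)_{2j}}=\frac{2}{(1+q^{4j})(q^2;q^2)_{2j}}.
\]
Substituting these into the left-hand side of Lemma \ref{thm5} (respectively Lemma \ref{thm10}) collapses $\beta_n$ into precisely twice the double sum appearing in \eqref{hecktypeite11} (respectively \eqref{hecktypeite12}).

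For the right-hand sides I would insert $\alpha_j$ and collapse the powers of $q$: in Lemma \ref{thm5} the weight $q^{-j^2}$ combines with $q^{2j^2}$ to leave $q^{j^2}$, while in Lemma \ref{thm10} the weight $q^{-2j^2}$ exactly cancels $q^{2j^2}$. Each right-hand side thus reduces to twice the relevant Hecke--type series. Since the factor $2$ (coming from the $i=0$ term of $(-1;q^2)_{2j}$ on the left and from the explicit $2$ inside $\alpha_n$ on the right) occurs on both sides, dividing through by $2$ produces \eqref{hecktypeite11} and \eqref{hecktypeite12} verbatim.

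The argument is mechanical once the two factorial reductions are in place, so the main obstacle is purely bookkeeping rather than conceptual. The points I would be most careful about are: confirming that the factor of $2$ genuinely appears on both sides so that the cancellation is legitimate; checking that the inner index $j$ runs over the correct range ($0\le j\le n$ on the left after expanding $\beta_n$, and $|j|\le n$ or $|j|\le\lfloor n/2\rfloor$ on the right as dictated by the two lemmas); and verifying that the substituted pair still satisfies the convergence hypotheses of the Andrews and Warnaar lemmas, which holds because \eqref{newbbaileypair1} is a genuine bilateral Bailey pair and the replacement $q\mapsto q^2$ preserves its defining relation with base $q^2$.
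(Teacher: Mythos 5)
Your proposal is correct and follows exactly the paper's (one-line) proof: substitute $q\mapsto q^2$ in the bilateral Bailey pair \eqref{newbbaileypair1} and insert it into Lemmas \ref{thm5} and \ref{thm10}. Your factorial reductions $\frac{(q^2;q^2)_{2n}}{(-q)_{2n+1}}=\frac{(q)_{2n}}{1+q^{2n+1}}$ and $\frac{(-1;q^2)_{2j}}{(q^4;q^4)_{2j}}=\frac{2}{(1+q^{4j})(q^2;q^2)_{2j}}$, together with the cancellation of the common factor $2$, supply precisely the bookkeeping the paper leaves implicit.
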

Note that when $z=1$ or $z=q^4$ in the above results, it gives the following identities on Hecke--type series
\begin{align*}
&\sum_{n=0}^{\infty}\frac{(q;q^2)_nq^n}{(1+q^{2n+1})(-q^2;q^2)_n}
=\sum_{n=0}^{\infty}q^{n(n+1)}\Big(1+2\sum_{j=1}^{n}(-1)^jq^{j^2}\Big),\\
&\sum_{n=0}^{\infty}\frac{(q;q^2)_nq^n}{(-q^2;q^2)_n}
=\sum_{n=0}^{\infty}q^{\binom{n+1}{2}}\Big(1+2\sum_{j=1}^{\lfloor\frac{n}{2}\rfloor}(-1)^j\Big).
\end{align*}
By using the bilateral Bailey lattice as given in Lemma \ref{wnewbbf2}, we get the following results.
\begin{theorem}\label{hecktypeite2}
We have
\begin{align}
&\sum_{n=0}^{\infty}\sum_{j=0}^n\frac{(q)_{2n}(z, q^4/z; q^4)_j}{(1+q^{2n+1})(q^4;q^4)_{n-j}(q^2;q^2)_{2j}}q^{3n-2j}
=\sum_{n=0}^{\infty}q^{n(n+1)}\sum_{j=-n}^{n}(-1)^jz^jq^{j^2-2j},\label{hecktypeite21}\\
&\sum_{n=0}^{\infty}\sum_{j=0}^n\frac{(q)_{2n}(z, q^4/z; q^4)_j}{(q^4;q^4)_{n-j}(q^2;q^2)_{2j}}q^{3n-2j}=\sum_{n=0}^{\infty}q^{\binom{n+1}{2}}\sum_{j=-\lfloor
\frac{n}{2}\rfloor}^{\lfloor\frac{n}{2}\rfloor}(-1)^jz^jq^{-2j}.\label{hecktypeite22}
\end{align}
\end{theorem}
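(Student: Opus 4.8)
The plan is to mimic exactly the derivation of Theorem~\ref{hecktypeite1}, replacing the bilateral Bailey lattice of Lemma~\ref{wnewbbf1} by the one in Lemma~\ref{wnewbbf2}. First I would produce the intermediate pair that plays the role of \eqref{newbbaileypair1}: starting from the key pair \eqref{originalbbp}, substitute $q\mapsto q^2$ so as to obtain a bilateral Bailey pair relative to $a=1$ with base $q^2$, namely $\big((-1)^n z^n q^{n^2-n},\,(z,q^2/z;q^2)_n/(q^2;q^2)_{2n}\big)$, and feed this into Lemma~\ref{wnewbbf2} with $a=1$. Since that lemma reads off the input pair at $(a^2,q^2)=(1,q^2)$, it returns a bilateral Bailey pair relative to $a=1$ with base $q$,
\[
\Big((-1)^n z^n q^{n^2-n},\ \sum_{j=0}^{n}\frac{(-q)_{2j}(z,q^2/z;q^2)_j}{(q^2;q^2)_{n-j}(q^2;q^2)_{2j}}q^{\,n-j}\Big),
\]
where the $j$-sum truncates at $j=0$ because $\beta_j(1,q^2)=0$ for $j<0$ by \eqref{basicbbf}.

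Next, because Andrews and Warnaar's lemmas (Lemma~\ref{thm5} and Lemma~\ref{thm10}) require a pair relative to $a=1$ with base $q^2$, I would apply $q\mapsto q^2$ once more to the pair just obtained. This gives $\alpha_n=(-1)^n z^n q^{2n^2-2n}$ together with $\beta_n=\sum_{j=0}^{n}(-q^2;q^2)_{2j}(z,q^4/z;q^4)_j\,q^{2(n-j)}/[(q^4;q^4)_{n-j}(q^4;q^4)_{2j}]$. Two routine factorial simplifications then do the bookkeeping: $(q^2;q^2)_{2n}/(-q)_{2n+1}=(q)_{2n}/(1+q^{2n+1})$, obtained from $(q^2;q^2)_{2n}=(q)_{2n}(-q)_{2n}$, and $(-q^2;q^2)_{2j}/(q^4;q^4)_{2j}=1/(q^2;q^2)_{2j}$, obtained from $(q^4;q^4)_{2j}=(q^2;q^2)_{2j}(-q^2;q^2)_{2j}$.

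Finally, I would insert this pair into Lemma~\ref{thm5} and Lemma~\ref{thm10}. For Lemma~\ref{thm5} the left side becomes $\sum_n (q)_{2n}q^n\beta_n/(1+q^{2n+1})$, and after the two simplifications above the inner power collapses to $q^n\cdot q^{2(n-j)}=q^{3n-2j}$, yielding the left side of \eqref{hecktypeite21}; on the right, the exponent reduces via $q^{-j^2}\alpha_j=(-1)^jz^jq^{j^2-2j}$ to give the stated Hecke--type sum. The derivation of \eqref{hecktypeite22} from Lemma~\ref{thm10} is identical except that the outer weight is $q^{\binom{n+1}{2}}$ and the inner exponent reduces via $q^{-2j^2}\alpha_j=(-1)^jz^jq^{-2j}$. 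The only point demanding care is tracking the three successive base changes ($q\mapsto q^2$ into \eqref{originalbbp}, the lattice output returning to base $q$, then $q\mapsto q^2$ again), since a single misalignment in the base would corrupt every exponent; the remaining work is elementary manipulation of $q$-shifted factorials.
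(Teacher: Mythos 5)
Your proposal is correct and follows essentially the same route as the paper: take $q\mapsto q^2$ in the key pair \eqref{originalbbp}, pass it through Lemma \ref{wnewbbf2} to get the intermediate pair (the paper records it, after the simplification $(-q)_{2j}/(q^2;q^2)_{2j}=1/(q)_{2j}$, as \eqref{newbbaileypair2}), then substitute $q\mapsto q^2$ again and insert into Lemmas \ref{thm5} and \ref{thm10}. The only difference is cosmetic: you defer the two $q$-shifted-factorial simplifications to the end rather than folding the first one into the intermediate pair, and your exponent bookkeeping matches the paper's in every step.
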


\begin{proof}
Taking $q\mapsto q^2$ in the bilateral Bailey pair \eqref{originalbbp}, and then inserting it into Lemma \ref{wnewbbf2}, we obtain that
\begin{align}
(\alpha_n,\ \beta_n)=\Big((-1)^nz^nq^{n(n-1)},\ \sum_{j=0}^n\frac{(z,q^2/z;q^2)_j}{(q^2;q^2)_{n-j}(q)_{2j}}q^{n-j}\Big)\label{newbbaileypair2}
\end{align}
forms a bilateral Bailey pair related to $a=1$. Substituting $q\mapsto q^2$ and inserting it into Lemma \ref{thm5} and Lemma \ref{thm10}, respectively, we   obtain the desired identities.
\end{proof}

When  $z=1$ or $z\mapsto q^4$ in \eqref{hecktypeite21}, it leads to
\begin{align*}
\sum_{n=0}^{\infty}\frac{(q;q^2)_n}{(1+q^{2n+1})(-q^2;q^2)_n}q^{3n}
=\sum_{n=0}^{\infty}q^{n(n+1)}\sum_{j=-n}^{n}(-1)^jq^{j^2-2j}.
\end{align*}
When $z=1$ or $z\mapsto q^4$ in \eqref{hecktypeite22}, we obtain that
\[
\sum_{n=0}^{\infty}\frac{(q;q^2)_{n}}{(-q^2;q^2)_{n}}q^{3n}=\sum_{n=0}^{\infty}q^{\binom{n+1}{2}}\sum_{j=-\lfloor
\frac{n}{2}\rfloor}^{\lfloor\frac{n}{2}\rfloor}(-1)^jq^{-2j}.
\]
If we take $z\mapsto -q^2$ in \eqref{hecktypeite22} and consider the parities of $n$, then it gives that
\begin{align*}
&\sum_{n=0}^{\infty}\sum_{j=0}^n\frac{(q)_{2n}(-q^2; q^4)_j^2}{(q^4;q^4)_{n-j}(q^2;q^2)_{2j}}q^{3n-2j}
=\sum_{n=0}^\infty (2n+1) (1+q^{2n+1})q^{2n^2+n}.
\end{align*}

\noindent {\small {\bf Acknowledgments.}   This work is supported by the National Natural
Science Foundation of China (No. 12071235) and the Fundamental Research Funds for the Central

\bibliographystyle{amsplain}



\end{document}